\newtheorem{theorem}{Theorem}[section]
\newtheorem{lemma}[theorem]{Lemma}
\newtheorem{remark}{Remark}[section]
\newcommand{\abs}[1]{\left\vert#1\right\vert}
\newcommand{\norm}[1]{\left\Vert#1\right\Vert}
\newcommand{\norml}[2]{\left\Vert#1\right\Vert_{L^2(#2)}}
\newcommand{\normb}[1]{\left\Vert#1\right\Vert_{1,h}}
\newcommand{\normc}[1]{\left\Vert\hskip -0.8pt \left\vert #1 \right\vert\hskip -0.8pt\right\Vert_{1,h}}
\newcommand{\set}[1]{\left\{#1\right\}}
\newcommand{\av}[1]{\left\{#1\right\}}
\newcommand{\jm}[1]{\left[#1\right]}
\newcommand{\M}{\mathcal{M}}
\newcommand{\I}{\mathcal{I}}
\newcommand{\Pp}{\mathcal{P}_p}
\newcommand{\R}{\mathbb{R}}
\newcommand{\br}{\mathbf{r}}
\newcommand{\bn}{\mathbf{n}}
\newcommand{\db}{\displaybreak[0]}
\newcommand{\nn}{\nonumber}
\newcommand{\al}{\alpha}
\newcommand{\be}{\beta}
\newcommand{\ep}{\varepsilon}
\newcommand{\ga}{\gamma}
\newcommand{\Ga}{\Gamma}
\newcommand{\La}{\Lambda}
\newcommand{\na}{\nabla}
\newcommand{\Om}{\Omega}
\newcommand{\pa}{\partial}
\newcommand{\ze}{\zeta}
\newcommand{\rd}{\,\mathrm{d}}
\title{An unfitted $hp$-interface penalty finite element method for elliptic interface problems}
\author{
Haijun Wu
\thanks{Department of Mathematics, Nanjing University, Jiangsu,
210093, P.R. China. ({\tt hjw@nju.edu.cn}). The work of this author was
partially supported by the national basic research program of China
under grant 2005CB321701 and by the NSF of China grant 10971096.}
\and
Yuanming Xiao\thanks{Department of Mathematics, Nanjing University, Jiangsu,
210093, P.R. China. ({\tt xym@nju.edu.cn}).}}
\begin{document}
\date{}
\maketitle


\setcounter{page}{1}

\begin{abstract}
 An $hp$ version of interface penalty finite element method ($hp$-IPFEM) is proposed for  elliptic interface problems in two and three dimensions on unfitted meshes. Error estimates in broken $H^1$ norm, which are optimal  with respect to $h$ and suboptimal with respect to $p$ by half an order of $p$, are derived. Both symmetric and non-symmetric IPFEM are considered. Error estimates in $L^2$ norm are proved by the duality argument.
\end{abstract}

{\bf Key words.} 
Elliptic interface problems, unfitted mesh, $hp$-IPFEM

{\bf AMS subject classifications. }
65N12, 
65N15, 
65N30 


\section{Introduction}\label{sec-1}
Let $\Om=\Om_1\cup\Ga\cup\Om_2$ be a bounded and convex polygonal or polyhedral domain in $\R^d, d=2$ or $3$, where $\Om_1$ and $\Om_2$ are two subdomains of $\Om$ and $\Ga=\pa\Om_1\cap\pa\Om_2$ is a $C^2$-smooth interface (see Fig.~\ref{f1}). Consider the following elliptic interface problem:
\begin{equation}\label{eP}\left\{
\begin{aligned}
            & - \na\cdot\big(a(x) \na u\big)  =  f,\qquad   &\text{ in }\Om_1\cup\Om_2,\\
            & \jm{u}=g_D, \qquad  \jm{\big(a(x) \na u\big)\cdot\bn}=g_N,\qquad &\text{ on } \Ga, \\
            &  u =  0,\qquad &\text{ on } \pa\Om,
\end{aligned}\right.
\end{equation}
where $\jm{v}=v|_{\Om_1}-v|_{\Om_2}$ denotes the jump of $v$ across the interface $\Ga$, $\bn$ is the unit outward normal to the boundary of $\Om_1$, and $a(x)$ is bounded from below and above by some positive constants. Note that $a(x)$ is allowed to be discontinuous across the interface $\Ga$.
\begin{figure}
  \centering
  \includegraphics[scale=1]{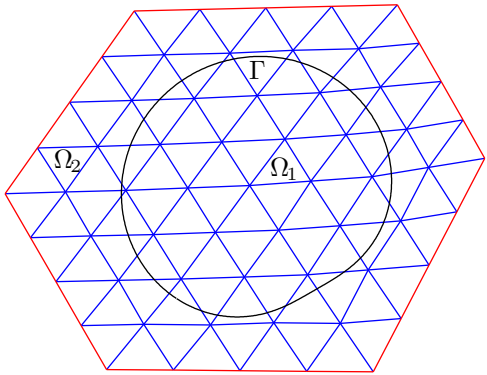}
\vskip -60pt  \caption{A sample domain $\Om$ and an unfitted mesh.}\label{f1}
\end{figure}

It is well known that the finite difference method (FDM) compared to the finite element method (FEM) is easy to implement since it discretizes the PDEs on simple meshes. But it is not easy to construct high order FDMs when the geometries are complicated. By contrast, the FEM provides a systematic way to design high order schemes for problems with complicated geometries, while the classical FEMs require that the interfaces and boundaries are fitted well enough by underlying meshes in order to achieve optimal convergence rates (cf. \cite{be87, cz98, lmwz10}). Therefore it has always been attractive to develop systematic ways for constructing high order numerical methods on simple unfitted meshes. Unfitted mesh techniques are particularly convenient for time-dependent problems when moving interfaces are involved.  One could just use the same mesh on the domain for different times instead of repeatedly remeshing the domain to fit the moving interfaces. Another motivation for developing unfitted methods is from the adaptive finite element methods. We know that in an adaptive finite element procedure, the meshes are locally refined repeatedly to equidistribute the error. But it is readily to encounter the ``inverted elements'', i.e., the elements with negative directional areas/volumes  (cf. \cite{shewchuk2002good}), near curved interfaces during interface-fitted local refinements, in particular, in the three dimensional case.     

Various finite difference schemes have been proposed on unfitted Cartesian grids, for example, the \emph{immersed boundary method} by Peskin \cite{peskin77}, the \emph{ghost fluid method} by Osher and his coworkers \cite{famo99}, the \emph{immersed interface method} by LeVeque and Li \cite{ll94}, and the \emph{matched interface and boundary method} by Zhou, et al. \cite{zzfw06}. These methods have the first order, second order or higher order truncation errors near the interfaces.  They have been generalized in various ways and applied to many problems \cite[etc]{lp04,lf05,tf03,lfk00,hloz97,hl99,lwcl03, yzw07}.

The existing finite element schemes on unfitted meshes are usually designed by proper modifications of the standard finite element methods near the interfaces.  The \emph{immersed finite element method} by Li, et al. \cite{llw03,gll08} modifies the basis functions for nodal points near the interfaces to satisfy the homogeneous jump conditions. The linear immersed finite element method is optimally  convergent in $H^{1}$ and  $L^{2}$ norms for two dimensional problems \cite{ckw10}. 
We refer to \cite{cxz09} for \emph{adaptive immersed interface finite element methods}
 for solving elliptic and Maxwell interface problems with singularities.
The \emph{multiscale finite element method} by Chu, Graham, and Hou \cite{cgh10} modifies the basis functions near the interface by solving appropriately designed subgrid problems. The method was proved, for the two dimensional case, to be of first-order accuracy in $H^{1}$ norm and of second-order accuracy in $L^{2}$ norm. Note that when the interface intersects an element in a straight line, the multiscale finite element method coincides with the linear immersed finite element method (cf. \cite{cgh10}).  
The \emph{penalty finite element method} proposed by Babu\v{s}ka \cite{b70}  modifies the bilinear form near the interface by penalizing the jump of the solution value across the interface. The method may use high order elements but is suboptimally convergent in $h$. The linear case of the method was analyzed again by Barrett and Elliott \cite{be87} and was proved to be optimally convergent in $h$ under more regularity assumptions on the exact solution.  We remark that the idea of adding penalty terms is widely used in the interior penalty Galerkin methods \cite[etc]{arnold82,abcm01,baker77,dd76,fw09,rwg99}. The \emph{unfitted finite element method} proposed by A.~Hansbo and P.~Hansbo \cite{hh02} can be viewed as an improvement of the linear version of the Babu\v{s}ka's method. This unfitted method was proved, for two dimensional elliptic interface problems, to be optimally $h$-convergent in both $H^{1}$ and $L^{2}$ norms and the error estimates are uniform with respect to the relative position of the interface to the unfitted grid. The key idea of their method is using weighted average flux across the interface in the bilinear form instead of using the arithmetic average one as in the standard interior penalty discontinuous Galerkin methods.  In their method, the weights along the intersection of the interface and a grid element are chosen according to the areas of the two parts of the element separated by the interface. More recently, Massjung \cite{m09} proposed an \emph{$hp$-unfitted discontinuous Galerkin method} for Problem~\eqref{eP} which uses weighted average flux across the interface and penalizes not only the jump condition on the solution value but also the jump condition on the flux. It was proved that, for the two dimensional case, the method converges in broken $H^1$ norm at an optimal rate with respect to $h$ and at a suboptimal rate with respect to $p$ by a factor of $p$. The flux penalty term is not essential in the convergence analysis but helpful for numerical stability (cf. \cite{m09}). We also refer to \cite{bastian2009unfitted, guyomarc2009discontinuous} for recent more studies on the discontinuous Galerkin methods for elliptic interface problems.

In this paper we propose an $hp$-interface penalty finite element method ($hp$-IPFEM) for the interface problem \eqref{eP}. In this method we penalize the jump conditions on the solution value as well as the flux across the interface and adopt the arithmetic average flux across the interface in the bilinear form. We provide a rigorous analysis to show that, for both two and three dimensional cases,  the error estimates in broken $H^{1}$ norm are optimal with respect to $h$ and suboptimal with respect to $p$ by a half order of $p$. Both symmetric and non-symmetric IPFEM are considered. Some $hp$-error estimates in $L^2$ norm are also derived by the duality argument.
We would like to mention that our tricks on dealing with discontinuities across the interface are common in the standard interior penalty discontinuous Galerkin methods. They are adopted by \cite{hh02,m09} except the arithmetic average one. Sure we do not add penalty terms on edges/faces of elements away from the interface as the standard interior penalty discontinuous Galerkin methods do.

For any intersection segment/patch $e$ between the interface and a grid element $K^e$, let
$K^e_i=K^e\cap\Om_i, i=1, 2, $ be the two parts of $K^e$ separated by the interface. The key idea of our analysis is the use of the following straight forward identity on $e$:
\begin{equation*}
\frac{v_{1}+v_{2}}{2}=v_{i}+\frac{(-1)^{i}}{2}(v_{1}-v_{2}), \quad\text{where}\quad v_i=(a\na v_h)|_{K^e_i}\cdot\bn, \quad i=1, 2.
\end{equation*}
By this we may bound the term of the arithmetic average flux (i.e. $\frac{v_{1}+v_{2}}{2}$) by any component in the average (say $v_i$, $i=$ either 1 or 2) and the jump of flux (i.e. $v_1-v_2$) which is controlled by the flux penalty term. This component is then estimated by a local inverse trace inequality on the corresponding part of the interface element. As a result, it suffices to prove the local inverse trace inequality on either $K^e_1$ or $K^e_2$ instead of on both of them as treated in \cite{m09}. Therefore we avoid discussing various relative positions of the interface to an element $K^e$ which are already complicated in two dimensional case since $K^e_1$ or $K^e_2$ may not be shape regular. This makes our analysis much simpler and easy to apply to the three dimensional case. To the best of our knowledge, our results give the first $hp$ a priori error estimates for interface problems in three dimensions and the best in two dimensions. 

The rest of our paper is organized as follows. We formulate the $hp$-interface penalty finite element methods in Section~\ref{sec-2} and list some preliminary lemmas in Section~\ref{sec-3}. The $H^{1}$- and $L^{2}$- error estimates of the symmetric interface penalty finite element methods are given in Section~\ref{sec-4}. The error estimates of the non-symmetric interface penalty finite element methods are given in Section~\ref{sec-5}. Section~\ref{sec-6} is devoted to the proofs of the local trace inequality and the local inverse trace inequality.

 Throughout the paper, $C$ is used to denote a generic positive constant
which is independent of $h$, $p$, and the penalty parameters. We also use the shorthand notation
$A\lesssim B$ and $B\gtrsim A$ for the inequality $A\leq C B$ and $B\geq CA$.
$A\eqsim B$ is for the statement $A\lesssim B$ and $B\lesssim A$. Note that the constants in this paper may depend on the jump of the coefficient $a(x)$ across the interface. Although it is an interesting topic to derive error estimates with explicit dependence on the jump of the coefficient $a(x)$, it may bury our basic idea in a more complicated analysis. We leave this issue to future work.

\section{Formulation of $hp$-interface penalty finite element methods}\label{sec-2}
To formulate our $hp$-IPFEMs, we need to introduce some notation. The space, norm and inner product notation used in this paper are all
standard, we refer to \cite{bs08,ciarlet78} for their precise
definitions.

Let $\set{\M_h}$ be a family of conforming, quasi-uniform, and regular
partitions of the domain $\Om$ (cf. \cite{bs08}) into triangles and parallelograms/tetrahedrons and parallelepipeds.  For any $K\in
\M_h$, we define $h_K:=\mbox{diam}(K)$. Let $h:=\max_{K\in\M_h} h_K$.
Then $h_K\eqsim h$. Note that any element $K\in\M_h$ is considered
as closed. Obviously, each partition $\M_h$ induces a partition, denoted by $\I_h$, of the interface $\Ga$ (cf. Fig.~\ref{f1}). Note that any interface segment/patch $e\in\I_h$ is either contained entirely in one element in $\M_h$ and has nonempty intersection with its interior,  or is the common edge/face of two neighboring elements in $\M_h$. For any  $e\in\I_h$, let $K^e\in\M_h$ be one of the element(s) containing $e$ and let
$K^e_i=K^e\cap\Om_i, i=1, 2$.

 Introduce the  ``energy" space
\begin{equation}\label{eV}
    V:= \set{v: \;v|_{\Om_i}=v_i,\; \text{ where } v_{i}\in H_0^1(\Om), \, v_i|_K\in H^2(K),\,\forall K\in\M_h, \,i=1, 2}.
\end{equation}
 For any
$K\in \M_h$, let $\Pp(K)$ denote the set of all polynomials
whose degrees in all variables (total degrees) do not exceed $p$ if $K$ is a triangle/tetrahedron, and  the set of all polynomials whose degrees in each variable (separate degrees) $\le p$ if $K$ is a parallelogram/parallelepiped. Denote by $U_h^p$ the $hp$-continuous finite element space, that is,
\begin{equation}\label{eUhp}
    U_h^p:=\set{v_h\in H_0^1(\Om) : \;v_h|_K\in \Pp(K),\;\forall K\in\M_h}.
\end{equation}
We define our interface penalty finite element approximation
space $V_h^p$ as
\begin{equation}\label{eVhp}
V_h^p:=\set{v_h: \;v_h|_{\Om_i}=v_{ih}, \; \text{ where } v_{ih}\in U_h^p,\;i=1, 2}.
\end{equation}
Clearly, $V_h^p\subset V\subset L^2(\Om)$.
But $V_h^p\not\subset H^1(\Om)$.

We also define the jump $\jm{v}$ and average $\av{v}$ of $v$ on the interface $\Ga$
 as
 \begin{equation}\label{eja}
    \jm{v}:=v|_{\Om_1}-v|_{\Om_2},\qquad \av{v}:=\frac{v|_{\Om_1}+v|_{\Om_2}}{2}.
 \end{equation}
 Testing the elliptic problem \eqref{eP} by any $v\in V$, using integration by parts, and using the identity $\jm{vw}=\av{v}\jm{w}+\jm{v}\av{w}$,  we obtain
\begin{align*}
    \sum_{i=1}^2 \int_{\Om_i}a\na u\cdot\na v-\int_\Ga \av{a\na u\cdot\bn}\jm{v}-\int_\Ga g_N\av{v}=\int_{\Om} fv
\end{align*}
Define the bilinear form $a_h(\cdot,\cdot)$ on $V\times V$:
\begin{align}
a_h(u,v) :=&\sum_{i=1}^2 \int_{\Om_i}a\na u\cdot\na v
-\sum_{e\in\I_h}\int_{e} \Big(\av{a\na u\cdot\bn}\jm{v}
+\be \jm{u}\av{a\na v\cdot\bn}\Big)\label{eah}\\
&+  J_0(u,v) +J_1(u,v),\nn \\
J_0(u,v):=&\sum_{e\in\I_h}\frac{\ga_0\, p^2}{h_{K^e}}\int_{e} \jm{u}\jm{v},\label{eJ0}\\
J_1(u,v):=&\sum_{e\in\I_h}\frac{\ga_1\,h_{K^e}}{ p^2}\int_e
\jm{a\na u\cdot\bn}\jm{a\na v\cdot\bn},\label{eJ1}
\end{align}
where $\be$ is a real number, $\ga_0$ and $\ga_1$
are nonnegative numbers to be specified later. Define further the  linear form $F_h(\cdot)$ on $V$:
\begin{align}
 F_h(v):=&\int_{\Om} fv+\int_\Ga g_N\av{v}-\be\int_{\Ga} g_D\av{a\na v\cdot\bn}+J_D(v)+J_N(v),\\
 J_D(v):=&\sum_{e\in\I_h}\frac{\ga_0\, p^2}{h_{K^e}}\int_e g_D\jm{v},\label{eJD}\\
 J_N(v):=&\sum_{e\in\I_h}\frac{\ga_1\,h_{K^e}}{ p^2}\int_e g_N\jm{a\na v\cdot\bn}.\label{eJN}
\end{align}

It is easy to check that the solution $u$ to the problem \eqref{eP} satisfies the following formulation:
\begin{equation}\label{evp}
a_h(u,v) =F_h(v), \qquad\forall v\in V.
\end{equation}

  We are now ready to define
our $hp$-interface penalty finite element methods inspired by the formulation
\eqref{evp}:
Find $u_h\in V_h^p$ such that
\begin{equation}\label{eipfem}
a_h(u_h, v_h) =F_h(v_h), \qquad\forall  v_h\in V_h^p.
\end{equation}
\begin{remark}
\begin{enumerate}
  \item[{\rm (i)}] The above tricks on dealing with the discontinuities are from  the interior penalty discontinuous or continuous Galerkin methods (see, e.g., \cite{arnold82,fw09,be07}). When $\be=1$,  $a_h(\cdot,\cdot)$ is symmetric, that is, $a_h(v,w)=a_h(w,v)$, the method is termed as the symmetric interface penalty finite element method (SIPFEM), which corresponds to the symmetric interior penalty Galerkin method \cite{arnold82,w78}.
On the other hand, when $\be\neq 1$,
$a_h(\cdot,\cdot)$ is non-symmetric. In particular, when $\be =-1$, the method is called  the non-symmetric interface penalty finite element method (NIPFEM), which would correspond to the non-symmetric interior penalty Galerkin method  studied in \cite{rwg99}.
In this paper, for the ease of presentation, we only consider the case $\be=1$ and $\be=-1$,
nevertheless the ideas of this paper also apply to the  case $\be\neq \pm1$.
  \item[{\rm (ii)}] The unfitted finite element method given in \cite{hh02} does not include the penalty term $J_{1}$ and uses a weighted average $\av{v}=\kappa_{1}v|_{\Om_{1}}+\kappa_{2}v|_{\Om_{2}}$ instead of the arithmetic one in \eqref{eja}, where $\kappa_{i}|_{e}=\abs{K^e_i}/\abs{K}$. Optimal error estimates in both $H^{1}$ and $L^{2}$ norms were derived in \cite{hh02} for the linear element ($p=1$) in two dimensions.
  \item[{\rm (iii)}]  Massjung \cite{m09} proposed an $hp$-unfitted discontinuous Galerkin method for Problem~\eqref{eP} which uses weighted averages similar as in \cite{hh02} and penalizes both the jump of the solution value and that of the flux. It was proved that, for the two dimensional case, the method converges in broken $H^1$ norm at an optimal rate with respect to $h$ and at a suboptimal rate with respect to $p$ by a factor of $p$. 
 \end{enumerate}
 \end{remark}

Next, we introduce the following broken (semi-)norms on the space $V$:
\begin{align}
\normb{v}&:=\bigg(\sum_{i=1}^2\norml{a^{1/2}\na v}{\Om_i}^2+ \sum_{e\in\I_h} \frac{\ga_0\, p^2}{h_{K^e}}\norml{\jm{v}}{e}^2\label{enorma}\\
&\hskip 108pt+\sum_{e\in\I_h}\frac{\ga_1\,h_{K^e}}{ p^2}\norml{\jm{a\na v\cdot\bn}}{e}^2 \bigg)^{1/2},\nn  \db\\
\normc{v}&:=\bigg(\normb{v}^2+\sum_{e\in\I_h}
\frac{h_{K^e}}{\ga_0\, p^2}\norml{\av{a\na v\cdot
\bn}}{e}^2\bigg)^{1/2}. \label{enormb}
\end{align}

\section{Some preliminary lemmas}\label{sec-3}

Recall that, for any interface segment/patch $e\in\I_h$, $K^e\in\M_h$ is  an element containing $e$ and
$K^e_i=K^e\cap\Om_i, i=1, 2$. We have the
following lemma, which gives the trace and inverse trace
inequalities on $K^e$, whose proof will be given in
Section~\ref{sec-6}.
\begin{lemma}\label{ltrace} There exists a positive constant $h_0$ depending only on the interface $\Ga$ and the shape regularity of the meshes, such
that for all $h\in (0, h_0]$ and any interface segment/patch $e\in\I_h$, the following estimates hold for either $i_e=1$ or $i_e=2$:
  \begin{enumerate}
    \item[{\rm(i)}] For any $v\in H^1(K^e_{i_e})$,
    \begin{equation}
    \norml{v}{e}\lesssim h_{K^e}^{-1/2}\norml{v}{K^e_{i_e}}+\norml{v}{K^e_{i_e}}^{1/2}\norml{\na v}{K^e_{i_e}}^{1/2}.\label{tr1}
    \end{equation}
    \item[{\rm(ii)}] For any $v_h\in \Pp(K^e)$,
    \begin{equation}
    \norml{v_h}{e}\lesssim\frac{p}{h_{K^e}^{1/2}}\norml{v_h}{K^e_{i_e}}.\label{tr2}
    \end{equation}
  \end{enumerate}
\end{lemma}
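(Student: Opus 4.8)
The plan is to reduce both estimates to one-dimensional inequalities along a family of straight segments that issue from $e$ transversally and foliate a subregion of whichever part $K^e_{i_e}$ is the ``fat'' one, so that the curved and possibly non-shape-regular geometry is integrated out. First I would fix a point $x^\ast\in e$, set $\nu:=\bn(x^\ast)$, and reduce to a reference element by the affine map associated with $K^e$. Since $\Ga$ is $C^2$ and $\mathrm{diam}(K^e)\eqsim h$, the patch $e$ deviates from the tangent hyperplane $\set{x:(x-x^\ast)\cdot\nu=0}$ by at most $Ch^2$ and its unit normal stays within $O(h)$ of $\nu$; hence for $h\le h_0$ the fixed direction $\nu$ is uniformly transversal to $e$, with $\nu\cdot\bn(x)\ge 1/2$ on $e$. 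This near-flatness is exactly what makes all the constants below independent of the position of $\Ga$ relative to the mesh.

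Using shape regularity (an inscribed ball of radius $\eqsim h_{K^e}$) together with the convexity of $K^e$, I would then single out the index $i_e\in\set{1,2}$ for which $\nu$ points out of $K^e_{i_e}$ through $e$ and on which $K^e_{i_e}$ is fat, and construct a region $T\subseteq K^e_{i_e}$ swept out by segments $\set{x+t\nu:0\le t\le\delta}$, $x\in e$, with depth $\delta\eqsim h_{K^e}$ and with the map $(x,t)\mapsto x+t\nu$ having Jacobian bounded above and below by constants depending only on the shape regularity (the Jacobian equals $\abs{\nu\cdot\bn(x)}\in[1/2,1]$ up to curvature corrections of order $h$). The decisive point, and the reason for the phrasing ``either $i_e=1$ or $i_e=2$'', is that only the fat side need carry such a tube, so the thin sliver is never touched.

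Granting $T$, estimate (i) follows from the fundamental theorem of calculus on each fibre: from $v(x)^2=\tfrac1{\delta}\int_0^{\delta}\bigl(v(x+\delta\nu)^2-\int_s^{\delta}2v\,\pa_\nu v\,\rd r\bigr)\rd s$, integrating over $x\in e$ and converting the fibre integrals to integrals over $T$ via the Jacobian bounds, the first contribution is $\lesssim h_{K^e}^{-1}\norml{v}{K^e_{i_e}}^2$ and the second is $\lesssim\norml{v}{K^e_{i_e}}\norml{\na v}{K^e_{i_e}}$, which after taking square roots is exactly \eqref{tr1}. For (ii), along each fibre $v_h$ restricts to a univariate polynomial of degree at most $d\,p$, so the sharp endpoint estimate $\abs{q(0)}^2\lesssim p^2\delta^{-1}\norml{q}{(0,\delta)}^2$ for such polynomials (obtained by scaling to $(-1,1)$, expanding in Legendre polynomials, and applying Cauchy--Schwarz to $\sum_{k\le dp}\tfrac{2k+1}{2}$) gives $v_h(x)^2\lesssim\tfrac{p^2}{h_{K^e}}\int_0^{\delta}v_h(x+t\nu)^2\rd t$; integrating over $e$ and using the Jacobian bounds yields $\norml{v_h}{e}^2\lesssim\tfrac{p^2}{h_{K^e}}\norml{v_h}{K^e_{i_e}}^2$, i.e. \eqref{tr2}.

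The main obstacle is the geometric construction of $T$: proving that on the fat side the transversal segments reach depth $\eqsim h_{K^e}$ with a non-degenerate Jacobian, uniformly in the position of $\Ga$. The delicate configurations are the wedges formed where $e$ meets $\pa K^e$, near whose apices the straight-$\nu$ fibres would shorten; there I would either tilt the fibres so that they run into the bulk while remaining straight, or replace the pure fundamental-theorem step of (i) by a divergence-theorem identity $c_0\int_e v^2\le\int_{K^e_{i_e}}\bigl(v^2\,\na\cdot\phi+2v\,\phi\cdot\na v\bigr)$ driven by a field $\phi$ with $\phi\cdot\bn\ge c_0$ on $e$, $\phi\cdot\bn\le0$ on $\pa K^e_{i_e}\setminus e$, $\abs{\phi}\lesssim1$ and $\abs{\na\cdot\phi}\lesssim h_{K^e}^{-1}$. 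To rule out acute wedges occurring on the fat side at both ends of $e$, I would use that the edges/faces of a shape-regular triangle or tetrahedron meet at angles bounded away from $0$ and $\pi$, and that the only parallel face pairs of a parallelogram or parallelepiped are opposite ones; combined with the $O(h^2)$ flatness of $e$ and $h\le h_0$, this should give the uniform lower bound on the fibre lengths. I expect this case analysis, carried out after the affine reduction and greatly shortened by having to treat only the fat side, to be the technical heart of the argument in Section~\ref{sec-6}.
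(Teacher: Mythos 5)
Your overall strategy---reduce both estimates to one-dimensional inequalities along straight fibres issuing from $e$ into a single ``fat'' part $K^e_{i_e}$, with the fundamental theorem of calculus for (i) and an endpoint estimate for univariate polynomials for (ii)---is exactly the structure of the paper's proof, and your one-dimensional ingredients are sound. The genuine gap is the point you yourself flag and then do not close: the construction of the fibred region. A tube of parallel fibres $\set{x+t\nu:0\le t\le\delta}$ of uniform depth $\delta\eqsim h_{K^e}$ simply does not exist inside $K^e_{i_e}$, because fibres starting near the points where $e$ meets $\pa K^e$ are cut off after an arbitrarily short length; this is not a peripheral technicality but the whole difficulty of the lemma, since the constants must be uniform in the position of $\Ga$ relative to the mesh. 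Neither of your two remedies works as stated. The divergence-theorem identity has its sign condition backwards: writing $\int_{K^e_{i_e}}\na\cdot(v^2\phi)=\int_e v^2\phi\cdot\bn+\int_{\pa K^e_{i_e}\setminus e}v^2\phi\cdot\bn$ with $\bn$ the outward normal of $K^e_{i_e}$, you need $\phi\cdot\bn\ge0$ on $\pa K^e_{i_e}\setminus e$ so that the second term can be dropped; with your condition $\phi\cdot\bn\le0$ there, you only obtain $c_0\int_e v^2\le\int_{K^e_{i_e}}\na\cdot(v^2\phi)+\abs{\int_{\pa K^e_{i_e}\setminus e}v^2\phi\cdot\bn}$, and the boundary term is uncontrolled. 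Moreover, your argument for ruling out bad wedges appeals to the angles at which edges/faces of $K^e$ meet \emph{each other}, which is beside the point: the wedge is formed by $e$ and $\pa K^e$, and the angle at which $\Ga$ crosses an edge/face of $K^e$ is not a mesh quantity---it can be arbitrarily small. That such angles are bounded below on the fat side is true, but it needs a separate argument (convexity of the fat part together with a lower bound on its inradius), which you do not supply.

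The paper resolves precisely this point with a construction your proposal never reaches: it replaces parallel fibres by fibres radiating from a single apex. By shape regularity, $K^e$ contains a point $P$ whose distance to the tangent line/plane $\Ga_{P_0}$ at a point $P_0\in e$ is $\gtrsim h_{K^e}$ (otherwise $K^e$ would lie in a strip too narrow to contain its inscribed ball); $i_e$ is \emph{defined} as the index of the part containing $P$, and the fibred region is the cone $\tilde K^e=\set{t\,\br(\xi):\,t\in[0,1],\ \xi\in I}$ over $e$ with apex $P$ taken as the origin. Every fibre then automatically has length $\gtrsim h_{K^e}$, no wedge configuration ever has to be examined, and the change of variables is controlled by showing $G(\xi)=\abs{\br(\xi)\times\br'(\xi)}/\abs{\br'(\xi)}\gtrsim h_{K^e}$ on $e$ (and its three-dimensional analogue with the scalar triple product) via Taylor expansion and the $C^2$ smoothness of $\Ga$; this quantity is exactly the lower bound for the polar Jacobian of $(t,\xi)\mapsto t\,\br(\xi)$. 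Your divergence route could in fact be repaired by the radial field $\phi(x)=(x-P)/h_{K^e}$, which satisfies the corrected sign condition on $\pa K^e_{i_e}\setminus e$ by convexity of $K^e$ and satisfies $\phi\cdot\bn\gtrsim 1$ on $e$ because $P$ is far from the tangent plane---but that repair is the paper's apex construction in disguise, and it, not the one-dimensional estimates, is the technical heart that your proposal leaves open.
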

We remark that, if $e\subset\pa K^e$, then the result (i) in the above lemma is a direct consequence of  the standard local trace inequality, and the estimate in (ii) is already known (cf. \cite{be07}). 

The error analysis relies on the following well-known $hp$ approximation properties (cf. \cite{bs87,guo06,gs07}):
\begin{lemma}\label{lapp0} Suppose $w\in H^s(\Om)\cap H_0^1(\Om), s\ge 2$. Let $\mu= \min\set{p+1,s}$.
  \begin{enumerate}
\item[{\rm (i)}] There exists a piecewise polynomial $\tilde w_h$,  $\tilde w_h |_K\in \Pp(K)$, such that, for $ j=0,1,\cdots,s$,
\begin{equation*}
\norm{w-\tilde w_h}_{H^j(K)}\lesssim\frac{h^{\mu-j}}{p^{s-j}}\norm{w}_{H^{s}(K)},\quad \forall K\in \M_h.
 \end{equation*}
\item[{\rm (ii)}]  There exists $\hat w_h\in U_h^p$ such that
 \begin{equation*}
   \norm{w-\hat w_h}_{H^j(\Om)}\lesssim \frac{h^{\mu-j}}{p^{s-j}}\norm{w}_{H^{s}(\Om)}, \quad j=0,1.
 \end{equation*}
\end{enumerate}
Here the invisible constants in the above two inequalities depend on $s$ but are independent of  $h$ and $p$. $U_h^p$ is the $hp$-continuous finite element space defined in \eqref{eUhp}.
\end{lemma}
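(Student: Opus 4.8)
The plan is to reduce both estimates to a single reference element, obtain the $p$-explicit rate there, and then recover the $h$-dependence by affine scaling. Fix a reference simplex and a reference cube $\hat K$, and for each $K\in\M_h$ let $F_K\colon\hat K\to K$ be the affine map with $K=F_K(\hat K)$; by shape regularity and quasi-uniformity $\abs{\det DF_K}\eqsim h_K^d\eqsim h^d$, $\norm{DF_K}\lesssim h$, and $\norm{DF_K^{-1}}\lesssim h^{-1}$. Writing $\widehat w:=w\circ F_K\in H^s(\hat K)$, the core ingredient is a $p$-version polynomial approximation on $\hat K$: there is an operator $\widehat\Pi_p\colon H^s(\hat K)\to\Pp(\hat K)$ reproducing $\Pp(\hat K)$ such that
\[
\abs{\widehat w-\widehat\Pi_p\widehat w}_{H^j(\hat K)}\lesssim p^{-(s-j)}\abs{\widehat w}_{H^s(\hat K)},\qquad 0\le j\le s.
\]
On the reference cube this follows by expanding $\widehat w$ in a tensor product of Legendre polynomials and truncating each coordinate at degree $p$, using the sharp decay of the coefficients in terms of the $H^s$ regularity; on the reference simplex one passes to collapsed (Duffy) coordinates and uses the Jacobi/Koornwinder--Dubiner basis, or equivalently the Babu\v{s}ka--Suri extension technique, to obtain the same rate.

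For part~(i) I set $\tilde w_h|_K:=(\widehat\Pi_p\widehat w)\circ F_K^{-1}\in\Pp(K)$. When $s\le p+1$ the reference estimate above holds with the top-order seminorm on the right, and the affine scaling relations $\abs{v}_{H^j(K)}\lesssim h_K^{-j+d/2}\abs{v\circ F_K}_{H^j(\hat K)}$ and $\abs{\widehat w}_{H^s(\hat K)}\lesssim h_K^{s-d/2}\abs{w}_{H^s(K)}$ combine to give $\abs{w-\tilde w_h}_{H^j(K)}\lesssim (h_K/p)^{s-j}\abs{w}_{H^s(K)}$, that is, the claimed rate with $\mu=s$. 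When $s>p+1$ the attainable power of $h$ saturates at $p+1-j$ because $\widehat\Pi_p$ reproduces only $\Pp$ (a Bramble--Hilbert argument), whereas the full $H^s$ regularity still sharpens the power of $p$ to $s-j$; reconciling the two regimes is exactly the Babu\v{s}ka--Suri balancing and yields the sharp rate with $\mu=\min\set{p+1,s}$. Summing the lower-order seminorms into the full $H^j(K)$ norm and using $h_K\eqsim h$ finishes~(i).

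The delicate statement is part~(ii), because the elementwise approximant $\tilde w_h$ of part~(i) is in general discontinuous across interelement faces and need not vanish on $\pa\Om$, whereas membership in $U_h^p\subset H_0^1(\Om)$ requires both. The plan is to correct $\tilde w_h$ into a conforming function without degrading the rates, following the projection-based $hp$-interpolation route: split the contributions into vertex, edge, face, and interior parts, build $\hat w_h$ so that its traces on each shared edge/face coincide, and fix the contributions associated with $\pa\Om$ to be zero so that $\hat w_h\in H_0^1(\Om)$. Concretely, on each interior face one lifts the trace mismatch of $\tilde w_h$ into the two adjacent elements, and on boundary faces one subtracts the boundary trace.

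The main obstacle is to control this conforming correction with the correct, $p$-explicit, rate: the $H^1(\Om)$ and $L^2(\Om)$ norms of the liftings must be bounded by the face jumps of $\tilde w_h$, which are themselves of order $h^{\mu-1}/p^{s-1}$ and $h^{\mu}/p^{s}$ in the relevant trace norms. This demands polynomial extension/lifting operators on simplices and boxes whose stability constants are independent of $p$ (or grow slowly enough to be absorbed by the rate), together with trace inequalities explicit in both $h$ and $p$ --- precisely the technical machinery of \cite{bs87,guo06,gs07}. Once these $p$-robust estimates are available, adding the corrections to $\tilde w_h$ produces $\hat w_h\in U_h^p$ with $\hat w_h=0$ on $\pa\Om$ and the stated $H^j(\Om)$ bounds for $j=0,1$, completing the proof.
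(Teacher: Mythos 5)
The paper does not actually prove Lemma~\ref{lapp0}: it is invoked as a well-known $hp$ approximation result with a pointer to \cite{bs87,guo06,gs07}, so there is no internal argument to compare yours against line by line. Your sketch is a reconstruction of the standard proof underlying those references, and structurally it is the right one. For part (i), the route (reference-element $p$-version estimate, affine scaling, Bramble--Hilbert balancing to obtain $\mu=\min\{p+1,s\}$) is correct and essentially complete; the one blemish is that your displayed ``core ingredient,'' with the $H^s$ \emph{seminorm} on the right for all $0\le j\le s$, cannot hold as stated when $s>p+1$ (taking $\widehat w\in\mathcal{P}_{s-1}\setminus\Pp(\hat K)$ makes the right side vanish, forcing $\widehat\Pi_p$ to reproduce $\mathcal{P}_{s-1}\not\subset\Pp(\hat K)$); the correct reference estimate carries the full $H^s$ norm, and your subsequent two-regime discussion is in effect the repair, so this is sloppiness rather than a fatal error. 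For part (ii), you correctly identify the genuine difficulty --- the elementwise approximant of part (i) is discontinuous across faces and must be corrected into $U_h^p\subset H_0^1(\Om)$ by trace liftings whose stability is explicit in $p$ --- but you do not construct those liftings; you defer them to the machinery of \cite{bs87,guo06,gs07}. That deferral is legitimate (such $p$-stable polynomial extension operators do exist, e.g. in Babu\v{s}ka--Suri for two dimensions and via Mu\~noz-Sola-type extensions on tetrahedra), but it means your part (ii) is a correct plan resting on the deepest technical lemma of the cited works rather than a self-contained argument. Since the paper itself rests the entire lemma on those same citations, your attempt matches --- indeed exceeds --- the paper's own level of detail, and I find no step in it that would fail.
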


The following lemma is from the  standard Sobolev extension property (cf. \cite{adm75}).
\begin{lemma}\label{lext} Let $s\ge 2$. There exist two extension operators $E_1: H^s(\Om_1)\mapsto H^s(\Om)\cap H_0^1(\Om)$  and $E_2: \set{w\in H^s(\Om_2):\; w|_{\pa\Om}=0}\mapsto H^s(\Om)\cap H_0^1(\Om)$ such that
  \[(E_i w)|_{\Om_i}=w \quad\text{ and }\quad \norm{E_i w}_{H^s(\Om)}\lesssim\norm{w}_{H^s(\Om_i)},\quad  i=1, 2,\]
where $w\in H^s(\Om_1)$ for $i=1$ and $w\in H^s(\Om_2),\, w|_{\pa\Om}=0$ for $i=2$ (cf. Fig.~\ref{f1}).
\end{lemma}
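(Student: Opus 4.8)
The plan is to reduce the statement to a standard universal Sobolev extension theorem for Lipschitz domains, followed by two different post-processing steps that force membership in $H_0^1(\Om)$. First I would observe that both subdomains are Lipschitz: $\pa\Om_1=\Ga$ is $C^2$ (hence Lipschitz), while $\pa\Om_2$ consists of $\Ga$ together with the boundary $\pa\Om$ of a convex polygon/polyhedron, which is also Lipschitz, so $\Om_2$ is Lipschitz as well. Consequently a Sobolev extension theorem (cf. \cite{adm75}) furnishes, for the given $s\ge 2$, bounded operators $\tilde E_i:H^s(\Om_i)\mapsto H^s(\R^d)$ with $(\tilde E_i w)|_{\Om_i}=w$ and $\norm{\tilde E_i w}_{H^s(\R^d)}\lesssim\norm{w}_{H^s(\Om_i)}$; in fact Stein's total (universal) operator delivers a \emph{single} such operator valid for all $s$ simultaneously, even though $\Ga$ is only $C^2$. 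Restricting to $\Om$ already yields $H^s(\Om)$ extensions, so the only remaining work is to guarantee the vanishing trace on $\pa\Om$. This is where the two cases diverge because of the geometry (see Fig.~\ref{f1}): $\overline{\Om_1}$ is compactly contained in $\Om$, whereas $\Om_2$ is the component touching $\pa\Om$.

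For $E_1$ I would fix a cutoff $\chi\in C_0^\infty(\Om)$ with $\chi\equiv 1$ on a neighborhood of $\overline{\Om_1}$, which exists precisely because $\mathrm{dist}(\overline{\Om_1},\pa\Om)>0$, and set $E_1 w:=(\chi\,\tilde E_1 w)|_{\Om}$. Since $\chi\equiv1$ on $\Om_1$ we retain $(E_1 w)|_{\Om_1}=w$; since $E_1 w$ is compactly supported in $\Om$ it lies in $H^s(\Om)\cap H_0^1(\Om)$; and the multiplier estimate $\norm{\chi f}_{H^s(\R^d)}\lesssim\norm{f}_{H^s(\R^d)}$ (with constant depending only on the fixed smooth $\chi$), combined with the boundedness of $\tilde E_1$, gives $\norm{E_1 w}_{H^s(\Om)}\lesssim\norm{w}_{H^s(\Om_1)}$.

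For $E_2$ no cutoff is needed; I would simply set $E_2 w:=(\tilde E_2 w)|_{\Om}$. On $\Om_2$ this equals $w$, so $(E_2 w)|_{\Om_2}=w$, and the norm bound $\norm{E_2 w}_{H^s(\Om)}\le\norm{\tilde E_2 w}_{H^s(\R^d)}\lesssim\norm{w}_{H^s(\Om_2)}$ is immediate from restriction. Membership in $H_0^1(\Om)$ is the one point requiring care: because $\Om_1$ is separated from $\pa\Om$, a one-sided neighborhood in $\Om$ of $\pa\Om$ is contained in $\Om_2$, on which $E_2 w=w$; hence the trace of $E_2 w$ on $\pa\Om$ coincides with that of $w$, which vanishes by hypothesis. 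As $\Om$ is Lipschitz, a vanishing trace together with $E_2 w\in H^s(\Om)\subset H^1(\Om)$ gives $E_2 w\in H_0^1(\Om)$.

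The only genuine obstacle lies in the first step: one must not use a naive normal-reflection extension, which for a merely $C^2$ interface is bounded on $H^s$ only up to $s\approx 2$, but instead invoke the Calde\'ron/Stein construction that is bounded on any Lipschitz domain for the relevant order of smoothness (universally, in Stein's case); this is what permits $s$ to be arbitrarily large while $\Ga$ stays $C^2$. The remaining verifications---agreement on $\Om_i$, the multiplier estimate, and the trace identification on $\pa\Om$---are routine once the asymmetric roles of $\Om_1$ and $\Om_2$ are made explicit.
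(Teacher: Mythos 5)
Your proof is correct and matches the paper's approach: the paper offers no written proof of Lemma~\ref{lext}, simply attributing it to the standard Sobolev extension property of \cite{adm75}, and your construction---a Calder\'on/Stein extension on the Lipschitz subdomains $\Om_1$ and $\Om_2$, a cutoff $\chi\in C_0^\infty(\Om)$ to force $E_1w\in H_0^1(\Om)$, and the trace identification $E_2w|_{\pa\Om}=w|_{\pa\Om}=0$ near $\pa\Om$---is exactly the standard argument that citation stands for. You also correctly isolated the two points the bare citation glosses over: that a naive reflection across the merely $C^2$ interface would not be $H^s$-bounded for large $s$ (hence the need for the Calder\'on/Stein operator on Lipschitz domains), and the geometric fact, implicit in Fig.~\ref{f1}, that $\overline{\Om_1}$ is compactly contained in $\Om$, which is the source of the asymmetry between $E_1$ and $E_2$.
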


\section{Error estimates for the symmetric interface penalty finite element method}\label{sec-4}
In this section we derive the  $H^1$- and $L^2$- error estimates for the $hp$-SIPFEM (i.e. the case $\beta=1$).
The following lemma gives the continuity and coercivity of the
bilinear form $a_h(\cdot,\cdot)$ for the SIPFEM.
 \begin{lemma}\label{lbil} We have
 \begin{align}\label{econt}
    \abs{a_h(v,w)}\le 2\normc{v}\normc{w},\quad\forall\, v,w\in V.
 \end{align}
For any $0<\ga_1\lesssim 1$ and $0<\ep<1$, there exists a constant $\al_{0,\ep}$ independent of $h$, $p$, and the penalty parameters such that, if $\ga_0\ge\al_{0,\ep}\big/\ga_1$, then, for $0<h\le h_{0}$,
\begin{equation}\label{ecoer}
     a_h(v_h,v_h)\ge (1-\ep)\normc{v_h}^2,\quad\forall\, v_h\in V_h^p,
\end{equation}
  where the constant $h_{0}$ is from Lemma~\ref{ltrace}.
 \end{lemma}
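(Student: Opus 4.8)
The plan is to prove the two assertions separately, using the special norm $\normc{\cdot}$ in \eqref{enormb} which is designed precisely to make continuity trivial and to absorb the troublesome average-flux term.

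For the continuity estimate \eqref{econt}, I would write out $a_h(v,w)$ term by term and bound each piece by Cauchy--Schwarz. The two volume terms $\sum_i\int_{\Om_i}a\na v\cdot\na w$ combine into $\sum_i\norml{a^{1/2}\na v}{\Om_i}\norml{a^{1/2}\na w}{\Om_i}$, which is controlled by the first part of $\normb{v}\normb{w}$. The penalty terms $J_0$ and $J_1$ are handled identically, matching the second and third groups in the definition \eqref{enorma}. The only nonstandard pieces are the two average-flux terms $\int_e\av{a\na v\cdot\bn}\jm{w}$ and $\int_e\jm{v}\av{a\na w\cdot\bn}$ (here $\be=1$). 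For each such term I pair the factor $h_{K^e}^{1/2}/(\ga_0^{1/2}p)\,\norml{\av{a\na v\cdot\bn}}{e}$ against $\ga_0^{1/2}p/h_{K^e}^{1/2}\,\norml{\jm{w}}{e}$; the first factor is exactly the extra term appearing in $\normc{v}$ and the second is the $J_0$-weight appearing in $\normb{w}$. Summing over $e$ and applying the discrete Cauchy--Schwarz inequality gives the bound by $\normc{v}\normc{w}$ for each of the two terms, and the factor $2$ accounts for having two such terms.

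For the coercivity estimate \eqref{ecoer}, setting $w=v_h$ yields
\begin{equation*}
a_h(v_h,v_h)=\sum_{i=1}^2\norml{a^{1/2}\na v_h}{\Om_i}^2
+J_0(v_h,v_h)+J_1(v_h,v_h)
-2\sum_{e\in\I_h}\int_e\av{a\na v_h\cdot\bn}\jm{v_h},
\end{equation*}
so that $a_h(v_h,v_h)=\normb{v_h}^2-2\sum_e\int_e\av{a\na v_h\cdot\bn}\jm{v_h}$, and the task is to control the cross term from below. The crucial step is to estimate the average flux $\norml{\av{a\na v_h\cdot\bn}}{e}$ using the key identity highlighted in the introduction, namely $\av{a\na v_h\cdot\bn}=v_{i_e}+\tfrac{(-1)^{i_e}}{2}\jm{a\na v_h\cdot\bn}$, where $i_e$ is the index from Lemma~\ref{ltrace} for which the inverse trace inequality \eqref{tr2} holds. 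This reduces the average to the single one-sided component $v_{i_e}=(a\na v_h)|_{K^e_{i_e}}\cdot\bn$ plus the jump of the flux; the latter is already controlled by the $J_1$-weight in $\normb{v_h}$. Applying \eqref{tr2} to the polynomial $(a\na v_h)|_{K^e}\cdot\bn\in\Pp(K^e)$ (up to the factor $a$, which is bounded) gives $\norml{v_{i_e}}{e}\lesssim (p/h_{K^e}^{1/2})\norml{a^{1/2}\na v_h}{K^e_{i_e}}$, so
\begin{equation*}
\sum_{e\in\I_h}\frac{h_{K^e}}{\ga_0 p^2}\norml{\av{a\na v_h\cdot\bn}}{e}^2
\lesssim \frac{1}{\ga_0}\sum_{i=1}^2\norml{a^{1/2}\na v_h}{\Om_i}^2
+\frac{1}{\ga_0\ga_1}J_1(v_h,v_h),
\end{equation*}
after using finite overlap of the patches $e$ over each element. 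This shows $\normc{v_h}^2\le (1+C/\ga_0+C/(\ga_0\ga_1))\normb{v_h}^2$, so for $\ga_0\gtrsim 1/\ga_1$ the two norms are comparable.

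It then remains to bound the cross term. Using Young's inequality with parameter $\de>0$,
\begin{equation*}
2\Big|\sum_e\int_e\av{a\na v_h\cdot\bn}\jm{v_h}\Big|
\le \de\sum_e\frac{h_{K^e}}{\ga_0 p^2}\norml{\av{a\na v_h\cdot\bn}}{e}^2
+\frac{1}{\de}\sum_e\frac{\ga_0 p^2}{h_{K^e}}\norml{\jm{v_h}}{e}^2,
\end{equation*}
where the second sum is exactly $\tfrac1\de J_0(v_h,v_h)\le\tfrac1\de\normb{v_h}^2$. Feeding in the bound on the average-flux sum above, the cross term is dominated by $\big(C\de(1/\ga_0+1/(\ga_0\ga_1))+1/\de\big)\normb{v_h}^2$, and I would choose $\de$ large enough and then $\ga_0\ge\al_{0,\ep}/\ga_1$ (with $\al_{0,\ep}$ depending only on $\ep$ and the hidden constants) so that the total subtracted quantity is at most $\ep\,\normc{v_h}^2$, giving $a_h(v_h,v_h)\ge(1-\ep)\normc{v_h}^2$. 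The main obstacle is the average-flux estimate in the coercivity proof: everything hinges on replacing the two-sided average by a single one-sided polynomial norm via the identity and then invoking the inverse trace inequality \eqref{tr2} on possibly non-shape-regular subdomain $K^e_{i_e}$ — this is exactly the point where Lemma~\ref{ltrace}(ii) (and the freedom to pick the favorable index $i_e$) does the decisive work, and the restriction $0<\ga_1\lesssim1$ together with $\ga_0\gtrsim1/\ga_1$ is what keeps the constants balanced.
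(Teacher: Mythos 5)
Your proposal is correct and follows essentially the same route as the paper: the identity \eqref{ekey}, the one-sided inverse trace inequality of Lemma~\ref{ltrace}(ii) applied (componentwise, with $a$ and $\bn$ bounded) to $\na v_h$ on the favorable piece $K^e_{i_e}$, and a weighted Young inequality to absorb the cross term. The only difference is bookkeeping: you first establish $\normc{v_h}^2\le\big(1+C/\ga_0+C/(\ga_0\ga_1)\big)\normb{v_h}^2$ and work with $\normb{\cdot}$, while the paper keeps $\normc{v_h}^2$ explicit in \eqref{elbil1} and bounds the same two quantities against $\normb{v_h}^2$ --- an equivalent argument.
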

\begin{proof}
  \eqref{econt} is a  direct consequence of the definitions \eqref{eah}--\eqref{eJ1}, \eqref{enorma}--\eqref{enormb}, and the Cauchy-Schwarz inequality. It remains to prove \eqref{ecoer}. We have,
  \begin{align}\label{elbil1}
    a_h(v_h,v_h)=&\sum_{i=1}^2\norml{a^{1/2}\na v_h}{\Om_i}^2-2\sum_{e\in\I_h}\int_e \av{a\na v_h\cdot\bn}\jm{v_h}\\
    &+\sum_{e\in\I_h} \bigg(\frac{\ga_0\, p^2}{h_{K^e}}\norml{\jm{v_h}}{e}^2
    +\frac{\ga_1\,h_{K^e}}{ p^2}\norml{\jm{a\na v_h\cdot\bn}}{e}^2\bigg)\nn\\
=&\normc{v_h}^2-\sum_{e\in\I_h} \frac{h_{K^e}}{\ga_0\, p^2}\norml{\av{a\na v_h\cdot\bn}}{e}^2\nn\\
&\hskip 37pt -2\sum_{e\in\I_h}\int_e \av{a\na v_h\cdot
\bn}\jm{v_h}\nn.
  \end{align}
  It is clear that, for any $e\in\I_h$,
 \begin{equation}\label{ekey}
    \av{a\na v_h\cdot\bn}|_{e}=(a\na v_h)|_{K^e_{i_e}}\cdot\bn+\frac{(-1)^{i_e}}{2}\jm{a\na v_h\cdot\bn}|_{e},
 \end{equation}
  where $i_e$ is specified in Lemma~\ref{ltrace}. From Lemma~\ref{ltrace} (ii), there exists a constant $C_1$ such that,
  \begin{align}\label{elbil2}
    \norml{(a\na v_h)|_{K^e_{i_e}}\cdot\bn}{e}&\lesssim \frac{p}{h_{K^e}^{1/2}} \norml{\na v_h}{K^e_{i_e}}\le C_1\frac{p}{h_{K^e}^{1/2}} \norml{a^{1/2}\na v_h}{K^e_{i_e}}.
  \end{align}
Therefore, from \eqref{ekey} and \eqref{elbil2},
\begin{align}\label{elbil3}
   &\sum_{e\in\I_h}\frac{h_{K^e}}{\ga_0\, p^2} \norml{\av{a\na v_h\cdot\bn}}{e}^2\\
   &\le 2\sum_{e\in\I_h}\frac{h_{K^e}}{\ga_0\, p^2} \Big(\norml{(a\na v_h)|_{K^e_{i_e}}\cdot\bn}{e}^2+\frac14\norml{\jm{a\na v_h\cdot\bn}}{e}^2\Big)\nn\\
   &\le \sum_{e\in\I_h}\frac{2 C_1^2}{\ga_0}\norml{a^{1/2}\na v_h}{K^e_{i_e}}^2+ \sum_{e\in\I_h}\frac{h_{K^e}}{2 \ga_0\, p^2} \norml{\jm{a\na v_h\cdot\bn}}{e}^2\nn\\
   &\le \max\Big(\frac{2 C_1^2}{\ga_0}, \frac{1}{2\ga_0\ga_1}\Big)\normb{v_h}^2.\nn
\end{align}
On the other hand,
  \begin{align}\label{elbil4}
    &2\sum_{e\in\I_h}\int_e \av{a\na v_h\cdot\bn}\jm{v_h}\\
    &\le 2\sum_{e\in\I_h}\Big(\norml{(a\na v_h)|_{K^e_{i_e}}\cdot\bn}{e}+ \frac12\norml{\jm{a\na v_h\cdot\bn}}{e}\Big)\norml{\jm{v_h}}{e}\nn\\
    &\le \sum_{e\in\I_h}\bigg(2 C_1\norml{a^{1/2}\na v_h}{K^e_{i_e}}\frac{p}{h_{K^e}^{1/2}} \norml{\jm{v_h}}{e}\nn\\
    &\qquad\qquad\quad+\norml{\jm{a\na v_h\cdot\bn}}{e} \norml{\jm{v_h}}{e}\bigg)\nn\\
    &\le\sum_{e\in\I_h}\bigg( \frac{4C_1^2}{\ga_0\ep}\norml{a^{1/2}\na v_h}{K^e_{i_e}}^2
    +\frac{\ep}{4}\frac{\ga_0\, p^2}{h_{K^e}} \norml{\jm{v_h}}{e}^2\nn\\
    &\qquad\qquad\quad+\frac{1}{\ga_0\ep}\frac{h_{K^e}}{ p^2}\norml{\jm{a\na v_h\cdot\bn}}{e}^2
    +\frac{\ep}{4}\frac{\ga_0 p^2}{h_{K^e}} \norml{\jm{v_h}}{e}^2
    \bigg)\nn\\
    &\le\max\Big(\frac{4C_1^2}{\ga_0\ep}, \frac{1}{\ga_0\ga_1\ep}, \frac{\ep}{2}\Big)\normb{v_h}^2.\nn
  \end{align}
By combining \eqref{elbil1}, \eqref{elbil3}, and \eqref{elbil4}, we conclude that \eqref{ecoer} holds. This completes the proof of the lemma.
\end{proof}

The following lemma is an analogue of the Cea's lemma for finite element methods.
 \begin{lemma}\label{lcea}
 There exists a constant $\al_{0}$ independent of $h$, $p$, and the penalty parameters such that for $0<\ga_1\lesssim 1$, $\ga_0\ge\al_0\big/\ga_1$, and $0<h\le h_{0}$, the following error estimate holds.
 \begin{align*}
    \normc{u-u_h}\lesssim\inf_{z_h\in V_h^p}\normc{u-z_h},
 \end{align*}
   where the constant $h_{0}$ is from Lemma~\ref{ltrace}.
\end{lemma}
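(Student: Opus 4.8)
The plan is to follow the classical Cea's lemma strategy, adapted to the nonconforming $hp$-IPFEM setting. The essential ingredients are already in place: the coercivity \eqref{ecoer} and continuity \eqref{econt} of $a_h(\cdot,\cdot)$ from Lemma~\ref{lbil}, together with the Galerkin orthogonality that follows by subtracting \eqref{eipfem} from \eqref{evp}. First I would record the Galerkin orthogonality $a_h(u-u_h,v_h)=0$ for all $v_h\in V_h^p$, which is immediate since $u$ solves \eqref{evp} and $u_h$ solves \eqref{eipfem} with the same right-hand side $F_h$, and since $V_h^p\subset V$ so that $u-u_h\in V$ makes $a_h(u-u_h,\cdot)$ well-defined.

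Next, fix an arbitrary $z_h\in V_h^p$ and write $u-u_h=(u-z_h)+(z_h-u_h)$. Since $z_h-u_h\in V_h^p$, I would apply coercivity \eqref{ecoer} to $v_h=z_h-u_h$, giving
\begin{align*}
(1-\ep)\normc{z_h-u_h}^2\le a_h(z_h-u_h,z_h-u_h).
\end{align*}
Then I would split the first argument using $z_h-u_h=(z_h-u)+(u-u_h)$ and invoke Galerkin orthogonality to kill the term $a_h(u-u_h,z_h-u_h)=0$, leaving
\begin{align*}
(1-\ep)\normc{z_h-u_h}^2\le a_h(z_h-u,z_h-u_h)\le 2\normc{z_h-u}\normc{z_h-u_h},
\end{align*}
where the last step is continuity \eqref{econt}. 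Dividing by $\normc{z_h-u_h}$ (the case where it vanishes being trivial) yields $\normc{z_h-u_h}\lesssim\normc{u-z_h}$.

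Finally, I would combine this with the triangle inequality for $\normc{\cdot}$, namely
\begin{align*}
\normc{u-u_h}\le\normc{u-z_h}+\normc{z_h-u_h}\lesssim\normc{u-z_h},
\end{align*}
and then take the infimum over $z_h\in V_h^p$ to obtain the claim. I would also note that the hypotheses on the penalty parameters ($0<\ga_1\lesssim 1$, $\ga_0\ge\al_0/\ga_1$, $0<h\le h_0$) are exactly those under which Lemma~\ref{lbil} supplies coercivity with some fixed $\ep\in(0,1)$ — for instance $\ep=1/2$, which fixes $\al_0:=\al_{0,1/2}$ — so the constant $(1-\ep)^{-1}$ absorbed into $\lesssim$ is indeed independent of $h$, $p$, and the penalty parameters. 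The argument here is entirely routine; there is no genuine obstacle, since all the analytical difficulty has already been discharged into the proof of Lemma~\ref{lbil}. The only point requiring a modicum of care is verifying that $\normc{\cdot}$ is a genuine norm (not merely a seminorm) on the relevant space so that the triangle inequality applies, and that $u-u_h$ lies in $V$ so that every application of $a_h$ and $\normc{\cdot}$ is legitimate.
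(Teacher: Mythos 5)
Your proposal is correct and follows essentially the same route as the paper's own proof: Galerkin orthogonality from \eqref{evp} and \eqref{eipfem}, coercivity \eqref{ecoer} with $\ep=1/2$ (fixing $\al_0=\al_{0,1/2}$) applied to $z_h-u_h\in V_h^p$, continuity \eqref{econt}, division by $\normc{z_h-u_h}$, and the triangle inequality, with the infimum over $z_h$ at the end. The paper writes $\eta_h=u_h-z_h$ and chains the estimates as $\normc{\eta_h}^2\le 2a_h(\eta_h,\eta_h)=2a_h(u-z_h,\eta_h)\le 4\normc{u-z_h}\normc{\eta_h}$, which is the same argument up to sign conventions and ordering.
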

 \begin{proof} From \eqref{evp} and \eqref{eipfem}, we have the following Galerkin orthogonality
 \begin{equation}\label{eorth}
    a_h(u-u_h,v_h)=0,\quad\forall v_h\in V_h^p.
 \end{equation}
 For any $z_h\in V_h^p$, let $\eta_h=u_h-z_h$.
  By letting $\ep=1/2$ and in Lemma~\ref{lbil}, denoting by $\al_0=\al_{0,1/2}$, and using \eqref{eorth}, we have,
  for $0<\ga_1\lesssim 1$ and $\ga_0\ge\al_0\big/\ga_1$,
 \begin{align*}
    \normc{\eta_h}^2\le &2 a_h(\eta_h,\eta_h)=2 a_h(u-z_h,\eta_h)
    \le 4\normc{u-z_h}\normc{\eta_h},
 \end{align*}
 and hence,
\begin{equation*}
    \normc{u_h-z_h}\le 4\normc{u-z_h},
\end{equation*}
which implies that
 \begin{align*}
   \normc{u-u_h}=\normc{u-z_h+z_h-u_h}\le 5\normc{u-z_h}.
 \end{align*}
 This completes the proof of the lemma.
 \end{proof}

  The following lemma gives some approximation properties of the space $V_h^p$.
  \begin{lemma}\label{lapp} Let $s\ge 2$ be an integer and let $\mu= \min\set{p+1,s}$. Suppose the solution to the elliptic interface problem \eqref{eP} satisfies $u|_{\Om_i}\in H^s(\Om_i), i=1,2$. Then there exists $\hat u_h\in V_h^p$ such that, for $0<h\le h_{0}$,
 \begin{align}
    &\norml{u-\hat u_h}{\Om}\lesssim \frac{h^\mu}{p^s}\bigg(\sum_{i=1}^2\norm{u}_{H^s(\Om_i)}^2\bigg)^{1/2},\label{elapp1}\\
    &\normc{u-\hat u_h}\lesssim\Big(\frac{1}{p}+\ga_0+\frac{1}{\ga_0\,p}+\frac{\ga_1}{p}\Big)^{1/2}\frac{h^{\mu-1}}{p^{s-3/2}}
    \bigg(\sum_{i=1}^2\norm{u}_{H^s(\Om_i)}^2\bigg)^{1/2},\label{elapp2}
 \end{align}
    where the constant $h_{0}$ is from Lemma~\ref{ltrace}.
 \end{lemma}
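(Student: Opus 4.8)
The plan is to construct the approximant $\hat u_h$ explicitly from the extension operators of Lemma~\ref{lext} together with the $hp$-interpolation of Lemma~\ref{lapp0}, and then bound each term in the $\normc{\cdot}$-norm separately. First I would set $w_i := E_i(u|_{\Om_i}) \in H^s(\Om)\cap H_0^1(\Om)$ for $i=1,2$, so that $w_i|_{\Om_i} = u|_{\Om_i}$ and $\norm{w_i}_{H^s(\Om)}\lesssim \norm{u}_{H^s(\Om_i)}$. Applying Lemma~\ref{lapp0}(ii) to each $w_i$ produces $\hat w_{ih}\in U_h^p$ with the global $H^j$ estimates for $j=0,1$. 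I then define $\hat u_h\in V_h^p$ by $\hat u_h|_{\Om_i} := \hat w_{ih}|_{\Om_i}$, which is legitimate since $V_h^p$ is precisely the space of functions that are (restrictions of) $U_h^p$-functions on each subdomain. With this choice the approximation error on $\Om_i$ is controlled by $\norm{w_i - \hat w_{ih}}_{H^j(\Om_i)}\le \norm{w_i - \hat w_{ih}}_{H^j(\Om)}$, so the interior bounds come for free.

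The $L^2$ estimate \eqref{elapp1} is then immediate: summing the $j=0$ global bound over $i=1,2$ gives $\norml{u-\hat u_h}{\Om}^2 \le \sum_i \norm{w_i-\hat w_{ih}}_{L^2(\Om)}^2 \lesssim (h^\mu/p^s)^2 \sum_i \norm{u}_{H^s(\Om_i)}^2$, which is exactly the claim. For the energy estimate \eqref{elapp2} I would expand $\normc{u-\hat u_h}^2$ according to definitions \eqref{enorma}--\eqref{enormb} into four groups of terms: the bulk gradient term $\sum_i\norml{a^{1/2}\na(u-\hat u_h)}{\Om_i}^2$, the solution-jump penalty $J_0$, the flux-jump penalty $J_1$, and the average-flux term from $\normc{\cdot}$. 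The bulk term is handled by the $j=1$ bound of Lemma~\ref{lapp0}(ii), giving $(h^{\mu-1}/p^{s-1})^2$ per subdomain. The three interface terms all require passing from volume norms on $K^e_{i_e}$ to the $L^2(e)$ norm, and here I would invoke the trace inequality \eqref{tr1} of Lemma~\ref{ltrace}(i), applied to the relevant derivatives of $u - \hat u_h$ on $K^e_{i_e}$.

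The key technical step is estimating the interface penalty terms with the correct powers of $h$ and $p$. For the jump-of-value term $J_0$, I use that $\jm{u-\hat u_h}|_e$ can be split across the two subdomains and bounded via \eqref{tr1}; each factor $\norml{v}{K^e_{i_e}}^{1/2}\norml{\na v}{K^e_{i_e}}^{1/2}$ with $v = u-\hat u_h$, combined with the $j=0,1$ bounds, yields $h^{\mu-1/2}/p^{s-1/2}$ locally, and the prefactor $\ga_0 p^2/h_{K^e}$ in $J_0$ together with summation over $\I_h$ (using $h_{K^e}\eqsim h$ and that there are $\lesssim h^{-(d-1)}$ interface patches) produces the $\ga_0/p$ contribution. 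The flux-jump term $J_1$ and the average-flux term are treated analogously, using \eqref{tr1} on the first derivatives of $u-\hat u_h$; the $\ga_1/p$ and $1/(\ga_0 p)$ contributions arise from their respective weights $\ga_1 h_{K^e}/p^2$ and $h_{K^e}/(\ga_0 p^2)$. Collecting the four contributions gives the factor $\big(1/p + \ga_0 + 1/(\ga_0 p) + \ga_1/p\big)$ multiplying $(h^{\mu-1}/p^{s-3/2})^2$, as stated.

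The main obstacle I anticipate is the careful bookkeeping of the $p$-powers in the trace estimates for the interface terms, since the loss of half an order in $p$ (the $p^{s-3/2}$ rather than $p^{s-1}$) is precisely what distinguishes this result, and it originates from the $L^2$--$H^1$ interpolation implicit in the trace inequality \eqref{tr1}. One must be careful to apply the \emph{geometric} trace inequality \eqref{tr1} (valid for the possibly non-shape-regular pieces $K^e_{i_e}$) rather than a naive scaled trace inequality, and to track that the $h_{K^e}^{-1/2}$ term in \eqref{tr1} combines with the penalty weights to leave the stated $h$-power $h^{\mu-1}$ while the interpolated product term contributes the fractional $p$-loss. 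A secondary subtlety is confirming that the summation over all $e\in\I_h$, with the local contributions attached to $K^e_{i_e}$, does not overcount: since each $e$ selects a single fixed $K^e$ and a single $i_e$, and the elements $K^e$ are distinct (or overlap only with bounded multiplicity), the local energy contributions can be bounded by the global $H^s(\Om_i)$ norms without picking up an unbounded constant.
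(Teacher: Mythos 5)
Your construction of $\hat u_h$ (extend each piece of $u$ with Lemma~\ref{lext}, interpolate with Lemma~\ref{lapp0}(ii), restrict to $\Om_i$), your $L^2$ bound, and your treatment of the bulk gradient term and of the jump penalty $J_0$ all follow the paper's proof. However, there is a genuine gap in your treatment of the two flux terms (the $J_1$ term and the average-flux term in $\normc{\cdot}$). You propose to bound the flux traces by applying \eqref{tr1} to the first derivatives of $u-\hat u_h$, i.e.\ to $\na\ze_i$ with $\ze_i=u_i-\hat u_{ih}$. But \eqref{tr1} applied to $v=\pa_j\ze_i$ produces the product term $\norml{\pa_j\ze_i}{K^e_{i_e}}^{1/2}\norml{\na\pa_j\ze_i}{K^e_{i_e}}^{1/2}$, which involves the $H^2$-seminorm of $u_i-\hat u_{ih}$ on $K^e$. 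Lemma~\ref{lapp0}(ii) provides estimates for the globally continuous interpolant $\hat u_{ih}$ only for $j=0,1$; no $H^2$ error bound of the correct order is available for it, so this step cannot be completed as you describe. The paper resolves exactly this difficulty by bringing in a second, \emph{elementwise} approximant $\tilde u_{ih}$ from Lemma~\ref{lapp0}(i) (which has estimates up to order $j=s\ge 2$ on each element), splitting
\[
\na\ze_i=\na(u_i-\tilde u_{ih})+\na(\tilde u_{ih}-\hat u_{ih}),
\]
applying the trace inequality \eqref{tr1} to the first (smooth-error) part --- whose $H^2$ term is now controlled --- and the polynomial inverse trace inequality \eqref{tr2} of Lemma~\ref{ltrace}(ii) to the second (piecewise polynomial) part, followed by a triangle inequality to return to $u_i-\hat u_{ih}$. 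Your proposal never invokes \eqref{tr2} nor the elementwise approximant, and without this splitting the flux terms cannot be estimated from the cited lemmas.

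A secondary imprecision: \eqref{tr1} holds on $K^e_{i_e}$ for \emph{one} particular $i_e$ only, so taking ``$v=u-\hat u_h$'' on each subdomain separately, as you write for $J_0$, would require the trace inequality on both $K^e_1$ and $K^e_2$ --- precisely what Lemma~\ref{ltrace} deliberately does not provide (avoiding that is the point of the paper's one-sided construction). The correct move, which your construction enables but your argument does not state, is to apply \eqref{tr1} on the single region $K^e_{i_e}$ to \emph{both} extended errors $\ze_1$ and $\ze_2$ --- each is defined on all of $K^e$, hence lies in $H^1(K^e_{i_e})$ --- and then enlarge the volume norms from $K^e_{i_e}$ to $K^e$ before summing over $e$ with bounded overlap. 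This is how the paper bounds $J_0$, and the same device is needed when you estimate the flux terms.
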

 \begin{proof}
   Let $u_i=E_i u$, where $E_i, i=1, 2$ are the extension operators from Lemma~\ref{lext}. Then
   \begin{equation}\label{elapp3}
 u_i\in H^s(\Om)\cap H_0^1(\Om),\qquad u_i|_{\Om_i}=u|_{\Om_i}, \qquad \norm{u_i}_{H^s(\Om)}\lesssim\norm{u}_{H^s(\Om_i)}.
\end{equation}
From Lemma~\ref{lapp0}(ii) and \eqref{elapp3}, there exist $\hat u_{ih}\in U_h^p, i=1, 2$ such that
\begin{equation}\label{elapp4}
    \norm{u_i-\hat u_{ih}}_{H^j(\Om)}\lesssim \frac{h^{\mu-j}}{p^{s-j}}\norm{u}_{H^{s}(\Om_i)}, \quad j=0,1.
\end{equation}
Define $\hat u_h\in V_h^p$ by $\hat u_h|_{\Om_i}=\hat u_{ih}|_{\Om_i}, i=1, 2.$ It is clear that \eqref{elapp1} holds.

Denote by $\ze=u-\hat u_h$ and by $\ze_i=u_i-\hat u_{ih}, i=1, 2$. Obviously, $\ze|_{\Om_i}=\ze_i|_{\Om_i}, i=1, 2$. Next we estimate each term in $\normc{\ze}$ (cf. \eqref{enorma}--\eqref{enormb}). Clearly,
\begin{equation}\label{elapp6}
    \sum_{i=1}^2\norml{a^{1/2}\na \ze}{\Om_i}^2\lesssim \frac{h^{2\mu-2}}{p^{2s-2}}\bigg(\sum_{i=1}^2\norm{u}_{H^s(\Om_i)}^2\bigg).
\end{equation}
From Lemma~\ref{ltrace} (i) and \eqref{elapp4},
\begin{align}\label{elapp7}
     &\sum_{e\in\I_h} \frac{\ga_0\, p^2}{h_{K^e}}\norml{\jm{\ze}}{e}^2
     \lesssim \sum_{e\in\I_h} \frac{\ga_0\, p^2}{h_{K^e}}\sum_{i=1}^2\norml{\ze_i}{e}^2\\
     &\lesssim\sum_{e\in\I_h}\frac{\ga_0\, p^2}{h_{K^e}} \sum_{i=1}^2\Big(h_{K^e}^{-1}\norml{\ze_i}{K^e}^2+\norml{\ze_i}{K^e}\norml{\na \ze_i}{K^e}\Big)\nn\\
     &\lesssim \frac{\ga_0 p^2}{h} \sum_{i=1}^2\Big(h^{-1}\norml{\ze_i}{\Om}^2+\norml{\ze_i}{\Om}\norml{\na \ze_i}{\Om}\Big)\nn\\
     &\lesssim \ga_0 \frac{h^{2\mu-2}}{p^{2s-3}}\bigg(\sum_{i=1}^2\norm{u}_{H^s(\Om_i)}^2\bigg).\nn
\end{align}
To estimate the remaining terms, we estimate $\norml{(a\na \ze)|_{\Om_i}\cdot
\bn}{e}^2, i=1, 2, e\in\I_h.$ From Lemma~\ref{lapp0} (i),
there exist piecewise polynomials $\tilde u_{ih}$, $\tilde u_{ih}|_K\in \Pp(K)$, such that
\[\norm{u_i-\tilde u_{ih}}_{H^j(K)}\lesssim\frac{h^{\mu-j}}{p^{s-j}}\norm{u_i}_{H^{s}(K)},\quad\forall\, j=0, 1, 2, \quad i=1, 2, \quad K\in\M_h.\]
Therefore from Lemma~\ref{ltrace},
\begin{align*}
    \norml{(a\na \ze)|_{\Om_i}\cdot\bn}{e}^2&\lesssim \norml{\na (u_i-\tilde u_{ih})}{e}^2+\norml{\na (\tilde u_{ih}-\hat u_{ih})}{e}^2\\
    &\lesssim h_{K^e}^{-1} \norml{\na (u_i-\tilde u_{ih})}{K^e}^2\\
    &\qquad\qquad +\norml{\na(u_i-\tilde u_{ih})}{K^e}\abs{u_i-\tilde u_{ih}}_{H^2(K^e)} \\
    &\qquad\qquad +\frac{p^2}{h_{K^e}}\norml{\na (\tilde u_{ih}-u_i+u_i-\hat u_{ih})}{K^e}^2 \\
    &\lesssim \frac{h^{2\mu-3}}{p^{2s-4}}\norm{u_i}_{H^{s}(K^e)}^2 +\frac{p^2}{h}\norml{\na (u_i-\hat u_{ih})}{K^e}^2. \\
\end{align*}
It follows from \eqref{elapp3} and \eqref{elapp4} that
\[\sum_{e\in\I_h}\norml{(a\na \ze)|_{\Om_i}\cdot\bn}{e}^2\lesssim\frac{h^{2\mu-3}}{p^{2s-4}} \norm{u}_{H^{s}(\Om_i)}^2.\]
Thus,
\begin{align}\label{elapp8}
    &\sum_{e\in\I_h}\bigg(\frac{\ga_1\,h_{K^e}}{ p^2}\norml{\jm{a\na \ze\cdot\bn}}{e}^2+\frac{h_{K^e}}{\ga_0\, p^2}\norml{\av{a\na \ze\cdot\bn}}{e}^2 \bigg)\\
    &\lesssim \bigg(\ga_1+\frac{1}{\ga_0}\bigg)\frac{h^{2\mu-2}}{p^{2s-2}}\bigg(\sum_{i=1}^2\norm{u}_{H^s(\Om_i)}^2\bigg)\nn
\end{align}
Then \eqref{elapp2} follows by combining \eqref{elapp6}--\eqref{elapp8}. This completes the proof of the lemma.
 \end{proof}

By combining Lemma~\ref{lcea} and Lemma~\ref{lapp}, we have the following theorem  which gives  error estimate in the  norm $\normc{\cdot}$ for the SIPFEM. The proof is straightforward and is omitted.
\begin{theorem}\label{t1}
  Let $s\ge 2$ be an integer and let $\mu= \min\set{p+1,s}$. Suppose the solution to the elliptic interface problem \eqref{eP} satisfies $u|_{\Om_i}\in H^s(\Om_i), i=1,2$. Then there exists a constant $\al_{0}$  independent of $h$, $p$, and the penalty parameters such that for $0<\ga_1\lesssim 1$ and $\ga_0\ge\al_0\big/\ga_1$, the following error estimate holds:
  \begin{equation*}
    \normc{u-u_h}\lesssim \ga_0^{1/2}\frac{h^{\mu-1}}{p^{s-3/2}}
    \bigg(\sum_{i=1}^2\norm{u}_{H^s(\Om_i)}^2\bigg)^{1/2}, \quad \forall\, 0<h\le h_{0},
  \end{equation*}
  where the constant $h_{0}$ from Lemma~\ref{ltrace} depends only on the interface $\Ga$ and the shape regularity of the meshes.
\end{theorem}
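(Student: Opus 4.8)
The plan is to obtain the estimate purely by combining the two ingredients already in place: the quasi-optimality from Lemma~\ref{lcea} and the explicit approximation bound from Lemma~\ref{lapp}, and then to collapse the penalty-parameter-dependent prefactor into a single factor $\ga_0^{1/2}$ by exploiting the standing hypotheses $0<\ga_1\lesssim 1$ and $\ga_0\ge\al_0/\ga_1$.

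First I would invoke Lemma~\ref{lcea}. Its hypotheses are exactly the ones assumed in the theorem (with $\al_0$ the constant produced there), so it yields the Cea-type bound
\[
\normc{u-u_h}\lesssim\inf_{z_h\in V_h^p}\normc{u-z_h}.
\]
Since the infimum ranges over all of $V_h^p$, I would bound it from above by testing it against the specific interpolant $\hat u_h\in V_h^p$ constructed in Lemma~\ref{lapp}. Chaining the two estimates gives
\begin{align*}
\normc{u-u_h}&\lesssim\normc{u-\hat u_h}\\
&\lesssim\Big(\frac{1}{p}+\ga_0+\frac{1}{\ga_0\,p}+\frac{\ga_1}{p}\Big)^{1/2}\frac{h^{\mu-1}}{p^{s-3/2}}\bigg(\sum_{i=1}^2\norm{u}_{H^s(\Om_i)}^2\bigg)^{1/2}.
\end{align*}

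It then remains only to simplify the parenthesized prefactor, and this is the single step deserving any care. The crucial observation is that the admissible range pins $\ga_0$ away from zero: from $\ga_1\lesssim 1$ we get $1/\ga_1\gtrsim 1$, whence $\ga_0\ge\al_0/\ga_1\gtrsim 1$. Armed with this lower bound I would estimate each of the four summands against $\ga_0$ separately: $1/p\le 1\lesssim\ga_0$; the term $\ga_0$ is itself; $1/(\ga_0 p)\le 1/\ga_0\lesssim 1\lesssim\ga_0$; and $\ga_1/p\le\ga_1\lesssim 1\lesssim\ga_0$. Hence the whole parenthesis is $\lesssim\ga_0$, its square root is $\lesssim\ga_0^{1/2}$, and the asserted bound follows at once.

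I do not expect any genuine obstacle beyond this bookkeeping, since the two constituent lemmas do all the analytical work. The one point that must be handled correctly is precisely the recognition that the condition $\ga_0\ge\al_0/\ga_1$ combined with $\ga_1\lesssim 1$ keeps $\ga_0$ bounded below by a fixed constant; this is what licenses absorbing the mixed negative and positive powers of $\ga_0$, $\ga_1$, and $p$ into the single factor $\ga_0^{1/2}$ without picking up any hidden dependence on the penalty parameters or on $h$ and $p$.
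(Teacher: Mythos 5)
Your proposal is correct and is exactly the argument the paper intends: the paper states that Theorem~\ref{t1} follows by combining Lemma~\ref{lcea} with Lemma~\ref{lapp} and omits the details as straightforward. Your only nontrivial step---observing that $\ga_1\lesssim 1$ and $\ga_0\ge\al_0/\ga_1$ force $\ga_0\gtrsim 1$, so the prefactor $\frac{1}{p}+\ga_0+\frac{1}{\ga_0 p}+\frac{\ga_1}{p}\lesssim\ga_0$---is precisely the bookkeeping the authors leave to the reader, and you carry it out correctly.
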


Next we derive the $L^2$-error estimate by using the Nitsche's duality
argument (cf. \cite{ciarlet78}). Consider the following auxiliary problem:
\begin{equation}\label{ePD}\left\{
\begin{aligned}
            & - \na\cdot\big(a(x) \na w\big)  =  u-u_h,   &\text{ in }\Om_1\cup\Om_2,\\
            & \jm{w}=0, \quad  \jm{\big(a(x) \na w\big)\cdot\bn}=0, &\text{ on } \Ga, \\
            &  w =  0, &\text{ on } \pa\Om,
\end{aligned}\right.
\end{equation}
 It can be shown that $w$ satisfies (cf. \cite{b70})
\begin{equation}\label{ewsta}
\norm{w}_{H^2(\Om_1)}+\norm{w}_{H^2(\Om_2)}\lesssim \norml{u-u_h}{\Om}.
\end{equation}

 \begin{theorem}\label{t2} Under the conditions of Theorem~\ref{t1}, there holds the following estimate for the SIPFEM.
 \begin{equation*}
    \norml{u-u_h}{\Om}\lesssim\ga_0\frac{h^{\mu}}{p^{s-1}}
    \bigg(\sum_{i=1}^2\norm{u}_{H^s(\Om_i)}^2\bigg)^{1/2},\quad \forall\,0<h\le h_{0},
  \end{equation*}
  where the constant $h_{0}$ is from Lemma~\ref{ltrace}.
 \end{theorem}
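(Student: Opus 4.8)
The plan is to run Nitsche's duality argument based on the auxiliary problem \eqref{ePD}. Write $e:=u-u_h$. The crux of the whole proof is the duality identity
\[
a_h(v,w)=\int_\Om e\,v,\qquad\forall v\in V,
\]
where $w$ solves \eqref{ePD}. To establish it, I multiply the dual equation by $v$, integrate by parts over $\Om_1$ and $\Om_2$ separately, and use $\jm{vw}=\av{v}\jm{w}+\jm{v}\av{w}$. Because $w$ satisfies the homogeneous jump conditions $\jm{w}=0$ and $\jm{a\na w\cdot\bn}=0$ on $\Ga$, the flux-jump contribution drops out, leaving $\int_\Om e\,v=\sum_{i}\int_{\Om_i}a\na w\cdot\na v-\int_\Ga\av{a\na w\cdot\bn}\jm{v}$. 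On the other hand, evaluating \eqref{eah}--\eqref{eJ1} with second argument $w$, the terms $\av{a\na v\cdot\bn}\jm{w}$, $J_0(v,w)$ and $J_1(v,w)$ all vanish by $\jm{w}=0$ and $\jm{a\na w\cdot\bn}=0$, so $a_h(v,w)$ collapses to exactly the right-hand side above.

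With the identity in hand I take $v=e$ to get $\norml{e}{\Om}^2=a_h(e,w)$. Since $\be=1$ the form is symmetric, hence $a_h(e,w)=a_h(w,e)$, and the Galerkin orthogonality \eqref{eorth} lets me subtract any $w_h\in V_h^p$ and write $\norml{e}{\Om}^2=a_h(e,w-w_h)$. The continuity bound \eqref{econt} then gives $\norml{e}{\Om}^2\le 2\normc{e}\,\normc{w-w_h}$.

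Next I choose $w_h=\hat w_h\in V_h^p$ to be the approximant of $w$ furnished by Lemma~\ref{lapp} with $s=2$, so that $\mu=2$; the dual solution $w$ has the regularity and homogeneous boundary behaviour needed for this lemma by \eqref{ewsta} and \eqref{ePD}. This yields
\[
\normc{w-\hat w_h}\lesssim\Big(\tfrac1p+\ga_0+\tfrac{1}{\ga_0\,p}+\tfrac{\ga_1}{p}\Big)^{1/2}\frac{h}{p^{1/2}}\Big(\sum_{i=1}^2\norm{w}_{H^2(\Om_i)}^2\Big)^{1/2}.
\]
Under the standing hypotheses $0<\ga_1\lesssim1$ and $\ga_0\ge\al_0/\ga_1$ one has $\ga_0\gtrsim1$, so each of the four prefactor terms is $\lesssim\ga_0$ and the parenthesis is $\lesssim\ga_0^{1/2}$; by the regularity estimate \eqref{ewsta} the $H^2$-norm of $w$ is $\lesssim\norml{e}{\Om}$. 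Hence $\normc{w-\hat w_h}\lesssim\ga_0^{1/2}\,h\,p^{-1/2}\norml{e}{\Om}$. Substituting this into the continuity bound, cancelling one factor of $\norml{e}{\Om}$, and finally inserting the energy estimate of Theorem~\ref{t1}, namely $\normc{e}\lesssim\ga_0^{1/2}\,h^{\mu-1}p^{-(s-3/2)}\big(\sum_i\norm{u}_{H^s(\Om_i)}^2\big)^{1/2}$, produces the claimed bound $\norml{e}{\Om}\lesssim\ga_0\,h^{\mu}p^{-(s-1)}\big(\sum_i\norm{u}_{H^s(\Om_i)}^2\big)^{1/2}$.

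The main obstacle is the first step: carefully verifying the duality identity $a_h(v,w)=\int_\Om e\,v$, and in particular checking that every interface average/jump and penalty term carrying $w$ is annihilated by the homogeneous jump conditions of the dual solution. It is also worth stressing that the symmetry of $a_h$, valid precisely because $\be=1$, is what permits the Galerkin orthogonality to be applied in the second argument; for the non-symmetric case this step would have to be handled differently. Everything after the identity is a routine combination of continuity \eqref{econt}, the approximation Lemma~\ref{lapp}, the regularity \eqref{ewsta}, and Theorem~\ref{t1}.
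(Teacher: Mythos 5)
Your proposal is correct and follows essentially the same route as the paper: the same Nitsche duality argument with the auxiliary problem \eqref{ePD}, the identity $\norml{u-u_h}{\Om}^2=a_h(w,u-u_h)=a_h(u-u_h,w-\hat w_h)$ via symmetry and Galerkin orthogonality, the approximation bound \eqref{et22} from Lemma~\ref{lapp} with $s=2$ combined with the regularity estimate \eqref{ewsta}, continuity \eqref{econt}, and finally Theorem~\ref{t1}. The only difference is presentational: you spell out the verification of the duality identity and the absorption of the prefactor $\big(\frac1p+\ga_0+\frac{1}{\ga_0 p}+\frac{\ga_1}{p}\big)^{1/2}\lesssim\ga_0^{1/2}$, which the paper leaves implicit.
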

\begin{proof}Let $\eta=u-u_h$. Testing \eqref{ePD} by $\eta$ and using \eqref{eorth} we get
\begin{align}\label{et21}
    \norml{\eta}{\Om}^2=a_h(w,\eta)=a_h(\eta,w)=a_h(\eta,w-\hat w_h),
\end{align}
where $\hat w_h\in V_h^p$ satisfies the estimate (cf. Lemma~\ref{lapp})
\begin{equation}\label{et22}
    \normc{w-\hat w_h}\lesssim \ga_0^{1/2}\frac{h}{p^{1/2}}\sum_{i=1}^2\norm{w}_{H^2(\Om_i)}.
\end{equation}
  Therefore from \eqref{econt} and \eqref{ewsta},
\begin{align*}
    \norml{\eta}{\Om}^2\le 2\normc{w-\hat w_h}\normc{\eta}\lesssim \ga_0^{1/2}\frac{h}{p^{1/2}}\norml{\eta}{\Om}\normc{\eta},
\end{align*}
that is $\norml{\eta}{\Om}\lesssim \ga_0^{1/2}\frac{h}{p^{1/2}}\normc{\eta}$, which completes the proof of Theorem~\ref{t2}  by using Theorem~\ref{t1}.
\end{proof}

\section{Error estimates for the non-symmetric interface penalty finite element methods}\label{sec-5}
In this section we derive the  $H^1$- and $L^2$- error estimates for the $hp$-NIPFEM (i.e. the case $\beta=-1$).
The following lemma gives the continuity and coercivity of the
bilinear form $a_h(\cdot,\cdot)$ for the NIPFEM.
 \begin{lemma}\label{lnbil} We have, for the NIPFEM ($\beta=-1$),
 \begin{align}\label{encont}
    \abs{a_h(v,w)}\le 2\normc{v}\normc{w},\quad\forall\, v,w\in V.
 \end{align}
\begin{equation}\label{encoer}
     a_h(v_h,v_h)\gtrsim \frac{\ga_0\ga_1}{\ga_0\ga_1+\max\big(\ga_1,1\big)}\normc{v_h}^2,\quad\forall\, v_h\in V_h^p,\quad\forall\,0<h\le h_{0},
\end{equation}
   where the constant $h_{0}$ is from Lemma~\ref{ltrace}.
 \end{lemma}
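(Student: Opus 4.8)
The plan is to treat the two assertions separately, noting that the continuity estimate \eqref{encont} is insensitive to the sign of $\be$, while the coercivity estimate \eqref{encoer} is precisely where the non-symmetric choice $\be=-1$ pays off.

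For \eqref{encont} I would repeat verbatim the argument behind \eqref{econt}: expand $a_h(v,w)$ into its five constituent terms according to \eqref{eah}--\eqref{eJ1} and apply the Cauchy--Schwarz inequality term by term, matching the volume term against the energy parts of $\normc{v}$ and $\normc{w}$, the two consistency terms against the average-flux and value-jump components, and $J_0,J_1$ against the value-jump and flux-jump components respectively. Since $\abs{\be}=1$, the factor in front of the term $\be\jm{v}\av{a\na w\cdot\bn}$ is unchanged, so the same constant $2$ is obtained; nothing in the estimate distinguishes $\be=-1$ from $\be=1$.

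The heart of the lemma is \eqref{encoer}, and the key observation is that for $\be=-1$ the two consistency terms in \eqref{eah} become $-\sum_{e}\int_e\big(\av{a\na v_h\cdot\bn}\jm{v_h}-\jm{v_h}\av{a\na v_h\cdot\bn}\big)$, which vanishes identically. Hence, by \eqref{eJ0}--\eqref{eJ1} and \eqref{enorma},
\begin{equation*}
a_h(v_h,v_h)=\sum_{i=1}^2\norml{a^{1/2}\na v_h}{\Om_i}^2+J_0(v_h,v_h)+J_1(v_h,v_h)=\normb{v_h}^2,
\end{equation*}
with no indefinite cross term to absorb, and therefore no smallness requirement on $\ga_0,\ga_1$. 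It then remains only to pass from $\normb{\cdot}$ to $\normc{\cdot}$, i.e. to control the extra average-flux component of $\normc{v_h}$ by $\normb{v_h}$. For this I would reuse the estimate \eqref{elbil3} already established in the proof of Lemma~\ref{lbil}, which rests on the identity \eqref{ekey} together with the local inverse trace inequality of Lemma~\ref{ltrace}(ii), namely
\begin{equation*}
\sum_{e\in\I_h}\frac{h_{K^e}}{\ga_0\,p^2}\norml{\av{a\na v_h\cdot\bn}}{e}^2\le\max\Big(\frac{2C_1^2}{\ga_0},\frac{1}{2\ga_0\ga_1}\Big)\normb{v_h}^2.
\end{equation*}
Adding $\normb{v_h}^2$ to both sides gives $\normc{v_h}^2\le\big(1+\max(2C_1^2/\ga_0,1/(2\ga_0\ga_1))\big)\normb{v_h}^2$, and since $\max(2C_1^2/\ga_0,1/(2\ga_0\ga_1))\eqsim\max(\ga_1,1)/(\ga_0\ga_1)$, the bracket is $\eqsim(\ga_0\ga_1+\max(\ga_1,1))/(\ga_0\ga_1)$. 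Inverting this and combining with $a_h(v_h,v_h)=\normb{v_h}^2$ yields exactly \eqref{encoer}.

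I do not anticipate a genuine obstacle: unlike the symmetric case, there is no cancellation of consistency against penalty to engineer, so coercivity holds unconditionally and the only real work is the algebraic simplification of the constant into the stated form $\ga_0\ga_1/(\ga_0\ga_1+\max(\ga_1,1))$. The one point to state carefully is that the constant $C_1$ from Lemma~\ref{ltrace}(ii) is independent of $h$, $p$, and the penalty parameters, so that the equivalence $\max(2C_1^2/\ga_0,1/(2\ga_0\ga_1))\eqsim\max(\ga_1,1)/(\ga_0\ga_1)$ is uniform, as required for the conclusion \eqref{encoer} to hold for all $0<h\le h_0$.
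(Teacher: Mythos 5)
Your proposal is correct and takes essentially the same route as the paper: both rest on the identity $a_h(v_h,v_h)=\normb{v_h}^2$ (from the cancellation of the two consistency terms when $\be=-1$) combined with the bound \eqref{elbil3} from the proof of Lemma~\ref{lbil} to control $\normc{v_h}$ by $\normb{v_h}$. The only difference is that you spell out the algebraic reduction of the constant $1+\max\big(2C_1^2/\ga_0,\,1/(2\ga_0\ga_1)\big)$ to the stated form $\big(\ga_0\ga_1+\max(\ga_1,1)\big)/(\ga_0\ga_1)$, a step the paper leaves implicit.
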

\begin{proof}
  \eqref{encont} is a  direct consequence of the definitions \eqref{eah}--\eqref{eJ1}, \eqref{enorma}--\eqref{enormb}, and the Cauchy-Schwarz inequality. From \eqref{elbil3},
  \[\normc{v_h}^2\le \Big(1+\max\Big(\frac{2 C_1^2}{\ga_0}, \frac{1}{2\ga_0\ga_1}\Big)\Big)\normb{v_h}^2.\]
  Then \eqref{encoer} follows from the identity $a_h(v_h,v_h)=\normb{v_h}^2$. This completes the proof of the lemma.
\end{proof}

Following the argument of Theorem~\ref{t1} we have the following   error estimate in the  norm $\normc{\cdot}$ for the $hp$-NIPFEM. The proof is omitted.
\begin{theorem}\label{t3}
  Let $s\ge 2$ be an integer and let $\mu= \min\set{p+1,s}$. Suppose the solution to the elliptic interface problem \eqref{eP} satisfies $u|_{\Om_i}\in H^s(\Om_i), i=1,2$. For $0<\ga_1\lesssim 1$ and $\ga_0\gtrsim 1\big/\ga_1$, the following error estimate holds.
  \begin{equation*}
    \normc{u-u_h}\lesssim  \ga_0^{1/2}\frac{h^{\mu-1}}{p^{s-3/2}}
    \bigg(\sum_{i=1}^2\norm{u}_{H^s(\Om_i)}^2\bigg)^{1/2},\quad \forall\,0<h\le h_{0},
  \end{equation*}
  where the constant $h_{0}$ is from Lemma~\ref{ltrace}.
\end{theorem}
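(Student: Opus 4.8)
The plan is to reuse the two-step structure behind Theorem~\ref{t1}: a C\'ea-type quasi-optimality estimate assembled from the continuity and coercivity already furnished by Lemma~\ref{lnbil}, followed by the approximation estimate of Lemma~\ref{lapp}. Since Lemma~\ref{lapp} concerns only the approximability of $u$ in $V_h^p$ and never refers to the value of $\be$, it applies to the NIPFEM verbatim, so the only thing to check is that the C\'ea argument survives the switch from $\be=1$ to $\be=-1$.

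First I would record the Galerkin orthogonality. Subtracting \eqref{eipfem} from \eqref{evp} yields $a_h(u-u_h,v_h)=0$ for all $v_h\in V_h^p$; this uses only consistency of the scheme and is valid for any $\be$, exactly as in \eqref{eorth}. Next I would fix an arbitrary $z_h\in V_h^p$, set $\eta_h=u_h-z_h$, and chain the coercivity \eqref{encoer}, the orthogonality, and the continuity \eqref{encont}:
\[
\frac{\ga_0\ga_1}{\ga_0\ga_1+\max(\ga_1,1)}\normc{\eta_h}^2 \lesssim a_h(\eta_h,\eta_h)=a_h(u-z_h,\eta_h)\le 2\normc{u-z_h}\normc{\eta_h}.
\]
Under the hypotheses $0<\ga_1\lesssim 1$ and $\ga_0\gtrsim 1/\ga_1$ one has $\max(\ga_1,1)\eqsim 1$ and $\ga_0\ga_1\gtrsim 1$, so the coercivity constant $\ga_0\ga_1\big/(\ga_0\ga_1+\max(\ga_1,1))$ is bounded below by a fixed positive number and can be absorbed into $\lesssim$. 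This gives $\normc{\eta_h}\lesssim\normc{u-z_h}$, and the triangle inequality then yields the quasi-optimality $\normc{u-u_h}\lesssim\inf_{z_h\in V_h^p}\normc{u-z_h}$, the exact analogue of Lemma~\ref{lcea}.

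Finally I would take $z_h=\hat u_h$ from Lemma~\ref{lapp} and simplify its prefactor. Because $\ga_0\gtrsim 1/\ga_1\gtrsim 1$ and $\ga_1\lesssim 1$, each of the quantities $1/p$, $1/(\ga_0 p)$, and $\ga_1/p$ is $\lesssim\ga_0$, so $\big(1/p+\ga_0+1/(\ga_0 p)+\ga_1/p\big)^{1/2}\lesssim\ga_0^{1/2}$, and \eqref{elapp2} collapses to the claimed bound $\ga_0^{1/2}\,h^{\mu-1}p^{-(s-3/2)}\big(\sum_{i=1}^2\norm{u}_{H^s(\Om_i)}^2\big)^{1/2}$. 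The only genuinely new point compared with the symmetric case is the weaker, parameter-dependent coercivity constant in \eqref{encoer}: whereas the SIPFEM needs $\ga_0\ge\al_0/\ga_1$ large enough to absorb the consistency terms through the inverse trace inequality, here those terms cancel and $a_h(v_h,v_h)=\normb{v_h}^2$ identically, so the entire task reduces to the bookkeeping observation that $\ga_0\gtrsim 1/\ga_1$ keeps the coercivity constant away from zero. I expect no analytic obstacle beyond this parameter verification.
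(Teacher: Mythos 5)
Your proposal is correct and is exactly the argument the paper intends: the paper omits the proof of Theorem~\ref{t3}, stating only that it follows the argument of Theorem~\ref{t1}, and your write-up supplies precisely that argument --- Galerkin orthogonality plus the C\'ea-type estimate assembled from \eqref{encont} and \eqref{encoer} of Lemma~\ref{lnbil} (with the correct observation that $\ga_0\ga_1\gtrsim 1$ and $\max(\ga_1,1)\eqsim 1$ keep the coercivity constant bounded away from zero), followed by Lemma~\ref{lapp} and the simplification of the prefactor to $\ga_0^{1/2}$.
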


The following theorem gives the $L^2$-error estimate for the $hp$-NIPFEM.
 \begin{theorem}\label{t4} Under the conditions of Theorem~\ref{t3}, there holds the following estimate for the NIPFEM.
 \begin{equation*}
    \norml{u-u_h}{\Om}\lesssim  \Big( \ga_0\frac{h^{\mu}}{p^{s-1}}+\frac{h^{\mu-1/2}}{p^{s-1/2}}\Big)
    \bigg(\sum_{i=1}^2\norm{u}_{H^s(\Om_i)}^2\bigg)^{1/2},\quad \forall\,0<h\le h_{0},
  \end{equation*}
  where the constant $h_{0}$ is from Lemma~\ref{ltrace}.
 \end{theorem}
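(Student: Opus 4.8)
The plan is to adapt the Nitsche duality argument from Theorem~\ref{t2} to the non-symmetric case, where the loss of symmetry of $a_h(\cdot,\cdot)$ prevents us from simply writing $a_h(w,\eta)=a_h(\eta,w)$. First I would set $\eta=u-u_h$, solve the dual problem \eqref{ePD} with right-hand side $\eta$, and test it against $\eta$. Because $\be=-1$ now, integration by parts against the strong dual equation will yield $\norml{\eta}{\Om}^2=a_h(w,\eta)$ with the \emph{dual} bilinear form, which differs from $a_h(\eta,w)$. The key identity to track is therefore the amount by which symmetry fails; using the definition \eqref{eah} one sees that for $\be=-1$,
\begin{equation*}
a_h(w,\eta)-a_h(\eta,w)=2\sum_{e\in\I_h}\int_e\big(\av{a\na w\cdot\bn}\jm{\eta}-\av{a\na\eta\cdot\bn}\jm{w}\big),
\end{equation*}
and since the exact $w$ from \eqref{ePD} has $\jm{w}=0$ and $\jm{a\na w\cdot\bn}=0$, the troublesome boundary contributions collapse once we invoke Galerkin orthogonality \eqref{eorth} to replace $w$ by $w-\hat w_h$.

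The main steps, in order, are as follows. First I would establish $\norml{\eta}{\Om}^2=a_h(w,\eta)$ and then use $\jm{w}=0$ together with the orthogonality $a_h(\eta,\hat w_h)=0$ to write $\norml{\eta}{\Om}^2=a_h(\eta,w-\hat w_h)+R$, where $R$ collects the non-symmetric surplus terms. Because $w$ is the genuine solution of the transmission problem \eqref{ePD}, we have $\jm{w}=0$ and $\jm{a\na w\cdot\bn}=0$ on every $e\in\I_h$, so most of $R$ vanishes; the residual piece is controlled by $\normc{\eta}$ times the approximation error $\normc{w-\hat w_h}$ exactly as in \eqref{et21}. Second, I would apply the approximation estimate \eqref{et22}, namely $\normc{w-\hat w_h}\lesssim\ga_0^{1/2}\frac{h}{p^{1/2}}\sum_{i=1}^2\norm{w}_{H^2(\Om_i)}$, the continuity bound \eqref{encont}, and the dual regularity \eqref{ewsta}. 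This reproduces the $\ga_0\frac{h^\mu}{p^{s-1}}$ contribution exactly as in Theorem~\ref{t2}.

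The genuinely new term, and the source of the extra $\frac{h^{\mu-1/2}}{p^{s-1/2}}$ in the statement, comes from the non-symmetry: the surplus $R$ does not fully cancel because $u_h\notin H^1(\Om)$ carries nonzero jumps $\jm{u_h}=\jm{\eta}$ across $\Ga$, so a term like $\sum_{e}\int_e\av{a\na w\cdot\bn}\jm{\eta}$ survives. I would estimate this by splitting $\av{a\na w\cdot\bn}$ via the trace inequality Lemma~\ref{ltrace}(i) on the shape-regular part $K^e_{i_e}$ (using that $w\in H^2(\Om_i)$), bounding it by $h_{K^e}^{-1/2}\norm{w}_{H^2(K^e_{i_e})}$-type quantities, and pairing against $\norml{\jm{\eta}}{e}$, which is controlled by the $J_0$ part of $\normb{\eta}$. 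A careful accounting of the $h$- and $p$-powers — one loses a factor $h^{1/2}/p^{1/2}$ relative to the symmetric term because the jump penalty scales like $p^2/h_{K^e}$ and the flux trace like $h_{K^e}^{-1/2}$ — yields precisely the $\frac{h^{\mu-1/2}}{p^{s-1/2}}$ summand. The main obstacle is therefore the clean bookkeeping of this extra consistency term: identifying exactly which average-flux-times-jump term fails to cancel under $\be=-1$, and squeezing the optimal $h$- and $p$-rates out of the trace inequality without inadvertently invoking a bound on the non-shape-regular part $K^e_{3-i_e}$. Finally I would combine this with Theorem~\ref{t3} to bound $\normc{\eta}$ and conclude.
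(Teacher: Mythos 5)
Your overall duality framework coincides with the paper's: the identity $\norml{\eta}{\Om}^2=a_h(w,\eta)$, the reduction of the symmetry defect (via $\jm{w}=0$, $\jm{a\na w\cdot\bn}=0$) to $a_h(w,\eta)=a_h(\eta,w)-2\sum_{e\in\I_h}\int_e\jm{\eta}\av{a\na w\cdot\bn}$, Galerkin orthogonality to pass to $a_h(\eta,w-\hat w_h)$, and the use of \eqref{encont}, \eqref{et22}, \eqref{ewsta} to produce the $\ga_0 h^{\mu}/p^{s-1}$ contribution — this is exactly \eqref{et41}--\eqref{et42} in the paper. (Your displayed symmetry-defect identity has the wrong sign — the correct one is $a_h(w,\eta)-a_h(\eta,w)=2\sum_e\int_e\big(\jm{w}\av{a\na\eta\cdot\bn}-\jm{\eta}\av{a\na w\cdot\bn}\big)$ — but this is immaterial since the term is ultimately estimated in absolute value.)

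The genuine gap is in your estimate of the surplus term. You propose to bound $\norml{a\na w\cdot\bn}{e}$ by the \emph{elementwise} local trace inequality, Lemma~\ref{ltrace}(i), on $K^e_{i_e}$, i.e.\ by quantities of size $h_{K^e}^{-1/2}\norm{w}_{H^2(K^e_{i_e})}$. Track the powers: pairing with the jump gives $\norml{\jm{\eta}}{e}\,\norml{\na w}{e}\le\big(\tfrac{\ga_0 p^2}{h_{K^e}}\big)^{1/2}\norml{\jm{\eta}}{e}\cdot\big(\tfrac{h_{K^e}}{\ga_0 p^2}\big)^{1/2}\norml{\na w}{e}$, and the factor $\big(\tfrac{h_{K^e}}{\ga_0 p^2}\big)^{1/2}\cdot h_{K^e}^{-1/2}=\tfrac{1}{\ga_0^{1/2}p}$: the explicit $h_{K^e}^{1/2}$ coming from the penalty weight is exactly cancelled by the $h_{K^e}^{-1/2}$ in the local trace inequality. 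After summing (where $\sum_e\norm{w}_{H^2(K^e_{i_e})}^2\lesssim\sum_i\norm{w}_{H^2(\Om_i)}^2$ with no smallness to exploit), you obtain only $\tfrac{1}{\ga_0^{1/2}p}\normc{\eta}\norml{\eta}{\Om}$ rather than $\tfrac{h^{1/2}}{\ga_0^{1/2}p}\normc{\eta}\norml{\eta}{\Om}$, and feeding this into Theorem~\ref{t3} yields the rate $\tfrac{h^{\mu-1}}{p^{s-1/2}}$, short of the claimed $\tfrac{h^{\mu-1/2}}{p^{s-1/2}}$ by a factor of $h^{1/2}$. The paper avoids this loss entirely in \eqref{et43}: since $\jm{a\na w\cdot\bn}=0$, the flux $a\na w\cdot\bn$ is single-valued, so $\sum_e\norml{\na w}{e}^2=\norml{\na w}{\Ga}^2$ is controlled by the \emph{global} multiplicative trace inequality on $\Om_1$, namely $\norml{\na w}{\Ga}^2\lesssim\norml{\na w}{\Om_1}\norm{\na w}_{H^1(\Om_1)}$, which carries no negative power of $h$; hence the full weight $h_{K^e}\eqsim h$ survives. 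To rescue your elementwise route you would need an additional strip estimate of the form $\norml{\na w}{\cup_e K^e_{i_e}}^2\lesssim h\sum_i\norm{w}_{H^2(\Om_i)}^2$ for the layer of interface elements — a lemma established neither in the paper nor in your proposal.
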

\begin{proof} The proof is similar to that of Theorem~\ref{t2}. So we only sketch the differences here. \eqref{et21} becomes
\begin{align}\label{et41}
    \norml{\eta}{\Om}^2=a_h(w,\eta)=&a_h(\eta,w)-2\sum_{e\in\I_h}\int_e\jm{\eta}\av{a\na w\cdot\bn}\\
    =&a_h(\eta,w-\hat w_h)-2\sum_{e\in\I_h}\int_e\jm{\eta}\big(a\na w\cdot\bn\big),\nn
\end{align}
where $\eta=u-u_h$, $w$ is defined in \eqref{ePD}, and $\hat w_h\in V_h^p$ satisfies the estimate in \eqref{et22}.
 From \eqref{encont},\eqref{ewsta}, and \eqref{et22},
\begin{align}\label{et42}
    a_h(\eta,w-\hat w_h)\lesssim \ga_0^{1/2}\frac{h}{p^{1/2}}\norml{\eta}{\Om}\normc{\eta},
\end{align}
The last term in \eqref{et41} is bounded by using the Cauchy-Schwarz inequality and the trace
inequality:
\begin{align}\label{et43}
    -2\sum_{e\in\I_h}\int_e\jm{\eta}&\big(a\na w\cdot\bn\big)
    \le 2\sum_{e\in\I_h}\norml{\jm{\eta}}{e}\norml{a\na w\cdot\bn}{e}\\
    &\lesssim J_0(\eta,\eta)^{1/2}\bigg(\sum_{e\in\I_h}\frac{ h_{K^e}}{\ga_0\, p^2}\norml{\na w}{e}^2\bigg)^{1/2}\nn\\
    &\lesssim \frac{h^{1/2}}{\ga_0^{1/2} p}J_0(\eta,\eta)^{1/2}\norml{\na w}{\Ga}\nn\\
    &\lesssim \frac{h^{1/2}}{\ga_0^{1/2} p}J_0(\eta,\eta)^{1/2}\Big(\norml{\na w}{\Om_1}\norm{\na w}_{H^1(\Om_1)}\Big)^{1/2}\nn\\
     &\lesssim \frac{h^{1/2}}{\ga_0^{1/2} p}\normc{\eta}\norml{\eta}{\Om},\nn
\end{align}
where we have used \eqref{ewsta} to derive the last inequality. By combining \eqref{et41}--\eqref{et43} we conclude that
\[\norml{\eta}{\Om}\lesssim \Big( \ga_0^{1/2}\frac{h}{p^{1/2}}+\frac{h^{1/2}}{\ga_0^{1/2} p}\Big)\normc{\eta},\]
 which completes the proof of Theorem~\ref{t4}  by using Theorem~\ref{t3}.
\end{proof}

\begin{remark} Here is a brief comparison between the SIPFEM and the NIPFEM:
\begin{itemize}
\item The stiffness matrix of the SIPFEM is symmetric while that of the NIPFEM is not.
\item Both methods require that the penalty parameters satisfy $\ga_0\ga_1\ge C$ but  $C$ is a problem dependent constant for the SIPFEM while $C$ can be an arbitrary positive constant for the NIPFEM.
\item The $L^2$-error estimate for the SIPFEM is optimal with respect to $h$ while that for the NIPFEM is not. 
\end{itemize}
\end{remark}

\section{Proof of Lemma~\ref{ltrace}}\label{sec-6} In this section we prove the local trace inequality and the local inverse trace inequality in Lemma~\ref{ltrace}.
The proof is divided into two cases, the two dimensional case and
the three dimensional case. Recall that, $e\in\I_h$ is an interface segment/patch, $K^e\in\M_h$ is  an element containing $e$, and $K^e_i=K^e\cap\Om_i, i=1, 2$.

\subsection{Two dimensional case}\label{ssec-6-1}
Since $\Ga$ is a $C^2$ boundary of $\Om_1$, it can be expressed as a
union of open subsegments $\Ga_i, i\in\La:=\set{1, 2, \cdots, N}$
such that each $\Ga_i$ is parameterized by some function
$x=\br_i(\xi)\in [C^2(I_i)]^2$ satisfying $\abs{\br_i'(\xi)}\neq 0$, where
$I_i$ is an open interval. Since $\Ga_i\cap\Ga_j$ is either
empty or an open subsegment of $\Ga$, $e$ is contained entirely
in some $\Ga_k$ if $h_{K^e}$ is small enough, and hence $e$ is
parameterized by $x=\br(\xi), \xi\in I$ with $\br(\xi)=\br_k(\xi)$
and $I\subset I_k$. It is clear that the length of $I$ $\lesssim
h_{K^e}$.
\begin{figure}[htp]
\centering\includegraphics[scale=.8]{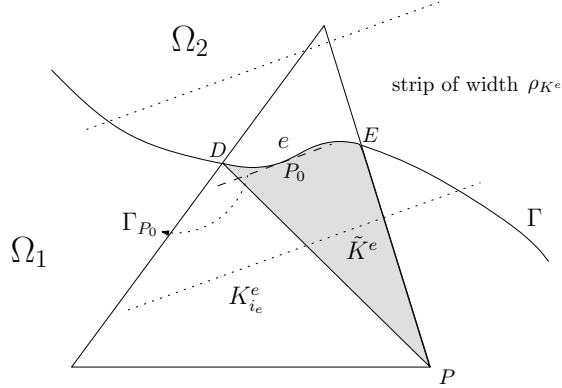} \caption{An
interface segment $e$ and the element $K^e$ in 2D}\label{interf2}\end{figure}

Let $P_0$ be any fixed point on $e$ and let $\Gamma_{P_0}$ be the tangent line to $\Gamma$ at $P_0$. Let $P$ be a point  in $K^e$ which achieves the maximum distance, denoted by $\tilde
h$, from the tangent line $\Gamma_{P_0}$. Denote by $\rho_{K^e}$ the diameter of the largest
disk contained in $K^e$. Clearly, we have $\tilde h \ge \rho_{K^e}/2 \gtrsim h_{K^e}$ (see Fig.~\ref{interf2},  otherwise $K^e$ would be
completely contained in a strip of width less than $\rho_{K^e}$ along
the axis $\Gamma_{P_0}$, which causes a contradiction). Now, we may choose $K^e_{i_e}$ to
be the one of two subregions $K^e_1$ and $K^e_2$ which contains the
point $P$ (in Fig.~\ref{interf2}, for instance, $K^e_{i_e}=K^e_1$).
Without loss of generality, we assume $P$ is the origin.  Suppose
$P_0$ corresponds to the parameter $\xi=\xi_0$. Then the vector $\overrightarrow{PP_0}=\br(\xi_0)$. 
It is clear that
\begin{equation}\label{r0}
\abs{\br(\xi_0)}\le h_{K^e},\quad \abs{\br'(\xi_0)}\eqsim 1.
\end{equation}
 We have the following
Taylor's expansions at $\xi_0$,
\begin{align}
\br(\xi)&=\br(\xi_0)+\br'(\xi_0)(\xi-\xi_0)+O(h_{K^e}^2) \label{r1},\\
\br'(\xi)&=\br'(\xi_0)+O(h_{K^e}). \label{r2}
\end{align}

To proceed, we need to bound the following term $G(\xi)$ from below:
$$G(\xi):=\frac{|\br(\xi)\times\br'(\xi)|}{|\br'(\xi)|},$$
 where $\mathbf{a}\times \mathbf{b}=a_{1}b_{2}-a_{2}b_{1}$ denotes the scalar cross product of two dimensional vectors. Recall that $|\mathbf{a}\times \mathbf{b}|=|\mathbf{a}||\mathbf{b}|\sin\langle\mathbf{a},\mathbf{b}\rangle$. Note that
$G(\xi_{0})=\displaystyle\frac{|\br(\xi_0)\times\br'(\xi_0)|}{|\br'(\xi_0)|}$
equals to the distance from $P$ to $\Gamma_{P_0}$, thus
$$G(\xi_{0})=\tilde h\gtrsim h_{K^e}.$$
By combining \eqref{r0}--\eqref{r2}, we derive that
$$G(\xi)=
\frac{\abs{\br(\xi_0)\times\br'(\xi_0)+O(h_{K^e}^{2})}}{\abs{\br'(\xi_0)+O(h_{K^e})}}=G(\xi_{0})+O(h_{K^e}^2).$$ Then
there exists a positive constant $h_0$ depending only on the interface $\Ga$ and the shape regularity of $K^e$, such that if $0<h_{K^e}\le h_0$, then
\begin{equation}
G(\xi)\gtrsim h_{K^e},
~~~\mbox{for all}~\xi\in I.\label{r3}
\end{equation}

 Next we prove the trace inequality \eqref{tr1}. Let $\tilde K^e$ be a fan-like region in $K^e_{i_e}$, with $e\cup
PD\cup PE$ as its boundary (cf. Fig.~\ref{interf2}), that is, 
$$\tilde K^e:=\set{x:\;x =t\cdot\br(\xi),\;t\in [0,1],\; \xi\in I}.$$
Let $v\in C^1(K^e_{i_e})$ and consider its restriction on $e$
as follows,
\begin{eqnarray} v^2(\br(\xi))&= &
\int_0^1\frac{\partial}{\partial t}\big((t^2v^2(t\cdot \br(\xi))\big)\rd
t=  \int_0^1 2(t^2vv_t+tv^2)\rd t.\label{est1}
\end{eqnarray}
From $v_t=\nabla v\cdot \br$, we have $$v^2(\br(\xi))\le  \int_0^1
2(|v||\nabla v||\br|+v^2)t\rd t.$$ Integrating along $e$, we
find
\begin{eqnarray}
\int_{e} v^2 \rd s&=&\int_{I} v^2(\br(\xi)) |\br'(\xi)|\rd \xi\label{est2}\\
&\le& 2\int_{I}\int_0^1(|v||\nabla
v||\br|+v^2)t |\br'(\xi)|\rd t \rd\xi\nonumber\\
&= & 2\int_{I}\int_0^1(|v||\nabla v||\br|+v^2)\cdot
t|\br(\xi)\times\br'(\xi)|\cdot\frac{1}{G(\xi)}\rd
t \rd \xi\nonumber\\
&\lesssim & \frac{1}{\inf_{\xi\in I}G(\xi)}\int_{I}\int_0^1(|v||\nabla v|h_{K^e}+v^2)\cdot
t|\br(\xi)\times\br'(\xi)|\rd t \rd \xi,\nonumber
\end{eqnarray}
where we have used $|\br(\xi)|\lesssim h_{K^e}$ to derive the last inequality.
By noting that $t|\br(\xi)\times\br'(\xi)|$ is the absolute value of  the Jacobian determinant
of the mapping $x =t\cdot\br(\xi)$ and using \eqref{r3}, we obtain that
\begin{align}\label{est2a}
\|v\|_{L^2(e)}^2
&\lesssim  \frac{1}{\inf_{\xi\in I}G(\xi)}\int_{\tilde K^e}(|v||\nabla v|h_{K^e}+v^2)\rd x\\
&\lesssim\|v\|_{L^2(\tilde K^e)}\|\nabla
v\|_{L^2(\tilde K^e)} +h_{K^e}^{-1}\|v\|_{L^2(\tilde K^e)}^2.\notag
\end{align}
Since
$\tilde K^e\subseteq K^e_{i_e}$, we conclude that
$$\|v\|_{L^2(e)}\lesssim h_{K^e}^{-\frac12}\|v\|_{L^2(K^e_{i_e})}+\|v\|_{L^2(K^e_{i_e})}^{\frac12}\|\nabla v\|_{L^2(K^e_{i_e})}^{\frac12}.$$ 
By a density argument, the above inequality is also
valid for $v\in H^1(K^e_{i_e})$. This completes the proof of
\eqref{tr1}.

It remains to prove the inverse trace inequality \eqref{tr2}. We need the following inverse inequality on $\mathcal{P}_{p}([0,1])$ (cf. \cite{schwab98}):
\begin{equation}\label{etr1d}\|w_{h}'\|_{L^2([0,1])}\lesssim p^2\|w_h\|_{L^2([0,1])}\quad \forall\, w_{h}\in \mathcal{P}_{p}([0,1]).
\end{equation}
By noting that $v_{h}(t\cdot\br(\xi))$ is a polynomial in $t$ of degree $\le p$ if $K^e$ is a triangle and of degree $\le 2p$ if $K^e$ is a parallelogram,
 we have from the above inverse inequality that
\begin{align}\label{est1a}
v_h^2(\br(\xi))&=\int_0^1\frac{\partial}{\partial t}\big(t^2v_h^2(t\cdot \br(\xi))\big)\rd
t=2\int_0^1t\,v_h(t\cdot \br(\xi))\frac{\partial}{\partial t}\big(t\,v_h(t\cdot \br(\xi))\big)\rd t\\
&\le 2\bigg(\int_0^1\big(t\,v_h(t\cdot \br(\xi))\big)^2 \bigg)^{\frac12}
\bigg(\int_0^1\Big(\frac{\partial}{\partial t}\big(t\,v_h(t\cdot \br(\xi))\big)\Big)^2 \bigg)^{\frac12}\notag\\
&\lesssim p^2\int_0^1 t^2\,v_h^2(t\cdot \br(\xi))\rd t.\notag
\end{align}
Following the same guidelines of \eqref{est2} and \eqref{est2a}, we derive that
\begin{align}\label{est3}
\|v_h\|_{L^2(e)}^2&=\int_{I} v_h^2(\br(\xi)) |\br'(\xi)|\rd \xi
\lesssim p^2\int_{I} \int_0^1 \,v_h^2(t\cdot \br(\xi))\, t\,|\br'(\xi)| \rd t \rd \xi\\
&\lesssim \frac{p^2}{\inf_{\xi\in I}G(\xi)}\int_{\tilde K^e} v_h^2 \rd
x\lesssim \frac{p^2}{h_{K^e}}\|v_h\|_{L^2(\tilde K^e)}^2,\notag
\end{align}
which yields the conclusion of \eqref{tr2}.

\subsection{Three dimensional case}\label{ssec-6-2}
The argument for the two dimensional case is readily
extended to the three dimensional one. We only sketch the
proof by indicating the necessary modifications in the proof for the two dimensional case.

Since $\Ga$ is a $C^2$ interface, it can be expressed as a
union of open subpatches $\Ga_i, i\in\La:=\set{1, 2, \cdots, N}$
such that each $\Ga_i$ is parameterized by some function
$x=\br_i(\xi,\eta)\in [C^2(U_i)]^3$ satisfying $\abs{\br_{i\xi}\times\br_{i\eta}}\neq 0$, where $U_i$ is an open domain in $\R^{2}$. Since $\Ga_i\cap\Ga_j$ is either
empty or an open subpatch of $\Ga$, $e$ is contained entirely
in some $\Ga_k$ if $h_{K^e}$ is small enough, and hence $e$ is
parameterized by $x=\br(\xi,\eta), (\xi,\eta)\in U$ with $\br(\xi,\eta)=\br_k(\xi,\eta)$
and $U\subset U_k$. It is clear that the diameter of $U$ $\lesssim
h_{K^e}$.
\begin{figure}[htp]
\centering\includegraphics[scale=.8]{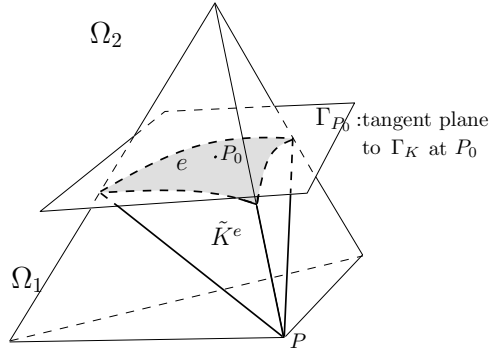} \caption{An
interface patch $e$ and the element $K^e$ in 3D}\label{interf3}\end{figure}

Let $P_0$ be any fixed point on $e$ and let $\Gamma_{P_0}$ be the tangent plane to $\Gamma$ at $P_0$. Then there exists a point $P\in K^e$ such that the distance from $P$ to $\Gamma_{P_0}$ $\gtrsim h_{K^e}$. Without loss of generality, we assume $P$ is the origin.  Suppose
$P_0$ corresponds to the parameter $(\xi,\eta)=(\xi_0,\eta_0)$. We have
\begin{align}
&\abs{\br(\xi_0,\eta_0)}\le h_{K^e},\quad \abs{\br_\xi(\xi_0,\eta_0)\times\br_\eta(\xi_0,\eta_0)}\eqsim 1. \label{r0-3}\\
&\br(\xi,\eta)=\br(\xi_0,\eta_0)+\br_\xi(\xi_0,\eta_0)(\xi-\xi_0)+\br_\eta(\xi_0,\eta_0)(\eta-\eta_0)+O(h_{K^e}^2) \label{r1-3},\\
&\br_\xi(\xi,\eta)\times\br_\eta(\xi,\eta)=\br_\xi(\xi_0,\eta_0)\times\br_\eta(\xi_0,\eta_0)+O(h_{K^e}). \label{r2-3}
\end{align}

Let 
\begin{equation*}
G(\xi,\eta):=\frac{\abs{\br(\xi,\eta)\cdot(\br_{\xi}(\xi,\eta)\times\br_{\eta}(\xi,\eta))}}
{\abs{\br_{\xi}(\xi,\eta)\times\br_{\eta}(\xi,\eta)}}.
\end{equation*}
By noting that $G(\xi_0,\eta_0)$ is the distance from $P$ to $\Gamma_{P_0}$, we have $G(\xi_0,\eta_0)\gtrsim h_{K^e}$. From \eqref{r0-3}--\eqref{r2-3},
there exists a positive constant $h_0$ depending only on the interface $\Ga$ and the shape regularity of $K^e$, such that if $0<h_{K^e}\le h_0$, then
\begin{equation}\label{r3-3}
G(\xi,\eta)\gtrsim h_{K^e}\quad \forall\, (\xi,\eta)\in U.
\end{equation}

Next we prove the trace inequality \eqref{tr1}. Let $\tilde K^e$ be the cone with apex $P$ and curved base $e$ as shown in Fig.~\ref{interf3}, i.e.,
$$\tilde K^e:=\set{x:\; x=t\cdot\br(\xi,\eta),\;t\in [0,1],\; (\xi,\eta)\in U}.$$
We have
\begin{equation} v^2(\br(\xi,\eta))= 
\int_0^1\frac{\partial}{\partial t}(t^3v^2(t\cdot \br(\xi,\eta))\rd
t=  \int_0^1 (2t^3vv_t+3t^2v^2)\rd t.\label{est1-3}
\end{equation}
Therefore, from $|\br|\lesssim h_{K^e}$ and the fact that $t^2\abs{\br(\xi,\eta)\cdot(\br_{\xi}(\xi,\eta)\times\br_{\eta}(\xi,\eta))}$ is the absolute value of  the Jacobian determinant
of the mapping $x=t\cdot\br(\xi,\eta)$, we conclude that
\begin{align}
&\int_{e} v^2 \rd \sigma=\int_{U} v^2(\br(\xi,\eta)) \abs{\br_\xi(\xi,\eta)\times\br_\eta(\xi,\eta)}\rd \xi\rd\eta \label{est2-3}\\
&\lesssim \int_{U}\int_0^1\big(|v||\nabla
v||\br|+v^2\big)\,t^2 \abs{\br_\xi(\xi,\eta)\times\br_\eta(\xi,\eta)}\rd t\rd \xi\rd\eta\nonumber\\
&\lesssim\int_{U}\int_0^1\big(|v||\nabla v|h_{K^e}+v^2\big)\,
t^2\abs{\br(\xi,\eta)\cdot(\br_{\xi}(\xi,\eta)\times\br_{\eta}(\xi,\eta))}\frac{1}{G(\xi,\eta)}\rd
t \rd \xi\rd\eta\nonumber\\
&\lesssim  \frac{1}{\inf_{(\xi,\eta)\in U}G(\xi,\eta)} \int_{\tilde K^e}(|v||\nabla v|h_{K^e}+v^2)\rd x\nonumber\\
&\lesssim\|v\|_{L^2(\tilde K^e)}\|\nabla v\|_{L^2(\tilde K^e)} + h_{K^e}^{-1}\|v\|_{L^2(\tilde K^e)}^2\,\nonumber
\end{align}
which implies that \eqref{tr1} holds.

It remains to prove \eqref{tr2}. By following the proof of \eqref{est1a}, we have
$$v_h^2(\br(\xi,\eta))\lesssim p^2\int_0^1 t^2\,v_h^2(t\cdot \br(\xi,\eta))\rd t.$$
Then, from \eqref{r3-3},
\begin{align}\label{est3-3}
\int_{e} v_h^2 \rd \sigma&=\int_{U} v_h^2(\br(\xi,\eta)) \abs{\br_\xi(\xi,\eta)\times\br_\eta(\xi,\eta)}\rd \xi\rd\eta \\
&\lesssim p^2\int_{U} \int_0^1 v_h^2(t\cdot \br(\xi,\eta))\, t^2\,\abs{\br_\xi(\xi,\eta)\times\br_\eta(\xi,\eta)}\rd t\rd \xi\rd\eta \notag\\
&\lesssim \frac{p^2}{\inf_{(\xi,\eta)\in U}G(\xi,\eta)} \int_{\tilde K^e}v_h^2\rd x
\lesssim \frac{p^2}{h_{K^e}}\norm{v_h}_{L^2(\tilde K^e)}^2,\notag
\end{align}
which implies that \eqref{tr2} holds. This completes the proof of Lemma~\ref{ltrace}.

\bibliographystyle{siam}
\bibliography{ref}
\end{document}